\documentclass[11pt,reqno]{amsart}
\usepackage{graphicx}
\usepackage{amssymb}
\usepackage{latexsym}


\newtheorem{theorem}{Theorem}[section]
\newtheorem{corollary}[theorem]{Corollary}
\newtheorem{lemma}[theorem]{Lemma}

\newtheorem{conjecture}[theorem]{Conjecture}
\newtheorem{question}[theorem]{Question}

\newtheorem{historical background}[theorem]{Historical Background}

\newtheorem{example}[theorem]{Example}

\newtheorem{definition}[theorem]{Definition}

\DeclareMathOperator\st{\operatorname{st}}

\def\sD {{\mathcal D}}

\def\sF {{\mathcal F}}

\def\sO {{\mathcal O}}

\def\sU {{\mathcal U}}
\def\sV {{\mathcal V}}

\def\min {\mathrm{min}}
\def\sup {\mathrm{sup}}

\def\< {{\langle}}
\def\> {{\rangle}}

\begin{document}

\vspace{0.5in}

\title[Cardinalities of spaces with regular $G_\kappa$-diagonals]
{Cardinalities of weakly Lindel\"of spaces with regular $G_\kappa$-diagonals}

\author{Ivan S. Gotchev}
\address{Department of Mathematical Sciences, Central Connecticut Sta\-te 
University, 1615 Stanley Street, New Britain, CT 06050}
\email{gotchevi@ccsu.edu}
\thanks{The author is grateful to the Mathematics Department at the Universidad Aut\'onoma Metropolitana, Mexico City, Mexico, for their hospitality and support during his sabbatical visit of UAM in the spring semester of 2015.}



\subjclass[2010]{Primary 54A25; Secondary 54D20}

\keywords{Cardinal function; regular $G_\kappa$-diagonal; regular diagonal degree; weak Lindel\"of 
number; almost Lindel\"of number; cellularity.}

\begin{abstract}
For a Urysohn space $X$ we define the \emph{regular diagonal degree} $\overline{\Delta}(X)$ of $X$ 
to be the minimal infinite cardinal $\kappa$ such that $X$ has a regular $G_\kappa$-diagonal i.e. there 
is a family $(U_\eta:\eta<\kappa)$ of open neighborhoods of $\Delta_X=\{(x,x)\in X^2:x\in X\}$ in 
$X^2$ such that $\Delta_X = \bigcap_{\eta<\kappa} \overline{U}_\eta$.

In this paper we show that if $X$ is a Urysohn space then: 
(1) $|X|\leq  2^{c(X)\cdot\overline{\Delta}(X)}$; 
(2) $|X|\leq  2^{\overline{\Delta}(X)\cdot 2^{wL(X)}}$; 
(3) $|X|\le wL(X)^{\overline{\Delta}(X)\cdot\chi(X)}$; and 
(4) $|X|\le aL(X)^{\overline{\Delta}(X)}$; 
where $\chi(X)$, $c(X)$, $wL(X)$ and $aL(X)$ are respectively the character, the cellularity, the weak 
Lindel\"of number and the almost Lindel\"of number of $X$. 

The first inequality extends to the uncountable case Buzyakova's result that the cardinality of a 
ccc-space with a regular $G_\delta$-diagonal does not exceed $2^\omega$.
It follows from (2) that every weakly Lindel\"of space with a regular $G_\delta$-diagonal has cardinality 
at most $2^{2^\omega}$.  

Inequality (3) implies that when $X$ is a space with a regular $G_\delta$-diagonal then 
$|X|\le wL(X)^{\chi(X)}$. This improves significantly Bell, Ginsburg and Woods inequality 
$|X|\le 2^{\chi(X)wL(X)}$ for the class of normal spaces with regular $G_\delta$-diagonals. 
In particular (3) shows that the cardinality of every first countable space with a regular 
$G_\delta$-diagonal does not exceed $wL(X)^\omega$.

For the class of spaces with regular $G_\delta$-diagonals (4) improves Bella and Cammaroto inequality 
$|X|\le 2^{\chi(X)\cdot aL(X)}$, which is valid for all Urysohn spaces. Also, it follows from (4) that the 
cardinality of every space with a regular $G_\delta$-diagonal does not exceed $aL(X)^\omega$. 
\end{abstract}

\maketitle

\section{Introduction}\label{s1}
Perhaps the two most famous results involving cardinal functions are Arhangel'skii's and Hajnal-Juh\'as' 
theorems asserting that if $X$ is a Hausdorff space then $|X|\le 2^{\chi(X)L(X)}$ \cite{Arh69} and 
$|X|\le 2^{\chi(X)c(X)}$ \cite{Juhasz80}, where $\chi(X)$, $L(X)$ and $c(X)$ denote respectively the 
character, Lindel\"of number and cellularity of $X$. 

Bell, Ginsburg and Woods showed in \cite{BGW78} that if $X$ is a normal space then 
\begin{equation}\label{Eq1}
|X|\le 2^{\chi(X)wL(X)}
\end{equation}
where $wL(X)$ is the weak Lindel\"of number of $X$. Since $wL(X)\le L(X)$ and $wL(X)\le c(X)$, 
(\ref{Eq1}) generalizes (for the class of normal spaces) Arhangel'skii's and Hajnal-Juh\'as' inequalities. 
In the same paper the authors constructed an example (see \cite[Example 2.3]{BGW78}) showing that 
for Hausdorff spaces the gap between $|X|$ and $2^{\chi(X)wL(X)}$ could be arbitrarily large and they 
asked (see \cite[4.1]{BGW78}) if (\ref{Eq1}) holds true for all regular $T_1$-spaces. To the best of our 
knowledge this question is still open (see \cite[Question 1]{Hodel06}). In this paper we give a partial 
answer to their question by showing that for every space $X$ with a regular $G_\delta$-diagonal even 
the stronger inequality $|X|\le wL(X)^{\chi(X)}$ is true.

In 1977, Ginsburg and Woods proved (see \cite{GW77}) that if $X$ is a $T_1$-space then 
$|X|\le 2^{e(X)\Delta(X)}$, where $e(X)$ and $\Delta(X)$ denote respectively the extent and the 
diagonal degree of $X$. As a corollary of that inequality the authors obtained that if $X$ is a 
collectionwise Hausdorff space then 
\begin{equation}\label{Eq2}
|X|\le 2^{c(X)\Delta(X)}.
\end{equation}
They also noticed (see \cite[Example 2.4]{GW77}) that the Kat\v{e}tov extension $k\omega$ of the 
countable discrete space $\omega$ is an example of a Urysohn space (every two points have disjoint 
closed neighborhoods) for which $|k\omega|>2^{c(k\omega)\Delta(k\omega)}$ and they asked if 
(\ref{Eq2}) was true for every regular $T_1$-space \cite[Question 2.5]{GW77}. In 1978 Arhangel'skii 
independently asked the countable version of that same question: Is it true that if $X$ is a regular 
ccc-space with a $G_\delta$-diagonal then $|X|\le 2^\omega$ (see \cite[p. 91, Question 16]{Arh78}). 
Shakhmatov answered their question in \cite{Sha84} by showing that there is no upper bound for the 
cardinality of completely regular ccc-spaces with $G_\delta$-diagonals. Then Arhangel'skii asked what if 
``$G_\delta$-diagonal" is replaced by ``regular $G_\delta$-diagonal" \cite{{Buz06}}. Buzyakova 
answered that question by proving the following theorem:

\begin{theorem}[\cite{Buz06}]\label{B}
The cardinality of a ccc-space with a regular $G_\delta$-diagonal does not exceed $2^\omega$. 
\end{theorem}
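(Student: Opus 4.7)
The plan is to use the ccc to extract a countable family $\sV$ of open subsets of $X$ and then to define an injection $\varphi\colon X\to\sP(\sV)$; since $|\sP(\sV)|=2^\omega$, this will bound $|X|$.

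Let $(U_n)_{n<\omega}$ be open neighborhoods of $\Delta_X$ in $X^2$ with $\Delta_X=\bigcap_n\overline{U_n}$. For each $n$, the family
\[
\sB_n=\{V\subseteq X\text{ open}:V\times V\subseteq U_n\}
\]
is a base for the topology of $X$, since $(x,x)\in U_n$ open in $X^2$ yields a product neighborhood $V\times V\subseteq U_n$ with $x\in V$. By the ccc and Zorn's lemma, fix a maximal pairwise disjoint subfamily $\sV_n\subseteq\sB_n$; then $|\sV_n|\le\aleph_0$ and $\bigcup\sV_n$ is dense in $X$ (otherwise some member of $\sB_n$ would enlarge $\sV_n$). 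Put $\sV=\bigcup_n\sV_n$ and define
\[
\varphi\colon X\to\sP(\sV),\qquad \varphi(x)=\{V\in\sV:x\in\overline V\}.
\]
The codomain has size $2^\omega$, so it suffices to show that $\varphi$ is injective.

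Given distinct $x,y\in X$, pick $n$ with $(x,y)\notin\overline{U_n}$, together with open $A\ni x$ and $B\ni y$ satisfying $(A\times B)\cap\overline{U_n}=\emptyset$. For any $V\in\sV_n$, $V\times V\subseteq U_n$ gives $\overline V\times\overline V\subseteq\overline{U_n}$; hence $\{x,y\}\subseteq\overline V$ would force $(x,y)\in\overline{U_n}$, contradicting the choice of $n$. Thus $\varphi(x)\cap\sV_n$ and $\varphi(y)\cap\sV_n$ are disjoint subsets of $\sV_n$.

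The main obstacle, and where I expect the proof to work hardest, is promoting ``disjoint'' to ``unequal''. A priori both $\varphi(x)\cap\sV_n$ and $\varphi(y)\cap\sV_n$ could be empty simultaneously, because a point of $X$ may lie in the closure of the dense union $\bigcup\sV_n$ without lying in the closure of any single member---already $0\in[0,1]$ exhibits this relative to $\{(1/(k+1),1/k):k\ge 1\}$. To bypass this I would enrich the invariant to also record, for each pair $V\in\sV_n$ and $V'\in\sV_m$ with $m\ge n$ and $V'\subseteq V$, whether $x\in\overline{V'}$; the enriched codomain still has cardinality $2^\omega$. The rectangle separation $(A\times B)\cap\overline{U_m}=\emptyset$ applied at arbitrarily deep $m$ then forces, under a presumed equality of enriched invariants, the existence of such a nested pair $(V,V')$ where $V'$ meets a basic neighborhood of $x$ drawn from $\sB_m$ but misses $B$, so that $y\notin\overline{V'}$ while $x$ is recorded as ``close'' to $V'$---a discrepancy. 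Executing this refinement cleanly, and in particular ensuring that the nested-cell data captures enough of the local structure at $x$ for the rectangle argument to bite, is the substantive technical work.
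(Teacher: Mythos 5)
There is a genuine gap, and it is exactly the one you flag: your map $\varphi(x)=\{V\in\sV:x\in\overline V\}$ need not be injective, because a point can lie in the closure of the dense set $\bigcup\sV_n$ without lying in the closure of any single cell of $\sV_n$, so $\varphi(x)\cap\sV_n$ and $\varphi(y)\cap\sV_n$ can both be empty for every $n$. Your proposed repair does not close this gap: for a pair $V\in\sV_n$, $V'\in\sV_m$ with $V'\subseteq V$, whether $V'\subseteq V$ holds is a property of the fixed families alone, so recording ``$x\in\overline{V'}$ for such nested pairs'' is a function of the original invariant $\varphi(x)$; if $\varphi(x)=\varphi(y)$ the enriched invariants coincide as well, and no amount of ``applying the rectangle at deeper $m$'' can produce a discrepancy from data that is literally determined by $\varphi$. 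So the substantive work you defer is not a technical refinement of your scheme but a different argument.

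For comparison, the proof the paper follows (Buzyakova's, generalized in Theorem \ref{GB} via Lemma \ref{LBG}) separates points not by closures of single cells inscribed in $W_n$ but by closures of \emph{unions} of boxes lying in the complement $X^2\setminus\overline{W_n}$. One fixes, for each $n$, a family $\sU_n$ of open boxes in $X^2\setminus\overline{W_n}$ of size at most $2^\omega$ whose union is dense in $X^2\setminus\overline{W_n}$; this is where Kurepa's inequality $c(X\times X)\le 2^{c(X)}$ enters, an ingredient with no counterpart in your outline. Then, for a fixed $x$, one takes $B\ni x$ with $B\times B\subseteq W_n$, covers $(\{x\}\times X)\setminus\overline{W_n}$ by boxes $U\times V$ with $x\in U\subseteq B$, passes to a countable subfamily by ccc, and replaces each such box (via Lemma \ref{LBG}) by countably many members of $\sU_n$ meeting it; the key positional fact is that the second factors of these members must miss $B$, so the set $F_n=X\setminus\overline{\bigcup\{V_\eta\}}$ contains $x$ while missing $\{y:(x,y)\notin\overline{W_n}\}$, whence $\{x\}=\bigcap_n F_n$ and each $F_n$ is drawn from a family of size $2^\omega$ (countable subfamilies of the $\sU_n$). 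Your disjointness observation for cells with $V\times V\subseteq U_n$ is correct but, by itself, only yields the weaker kind of bound obtained in Theorem \ref{G4} ($2^{2^\omega}$ for weakly Lindel\"of spaces); reaching $2^\omega$ requires the complement-box construction and Kurepa's theorem, which are missing from your proposal.
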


In this paper we show that if $X$ is a Urysohn space then: 

\begin{equation}\label{EqB}
|X|\leq  2^{c(X)\cdot\overline{\Delta}(X)};
\end{equation}
\begin{equation}\label{Eq3}
|X|\leq  2^{\overline{\Delta}(X)\cdot 2^{wL(X)}};
\end{equation}
\begin{equation}\label{Eq4}
|X|\le wL(X)^{\overline{\Delta}(X)\cdot{\chi(X)}}; {\text{and}}
\end{equation}
\begin{equation}\label{Eq5}
|X|\le aL(X)^{\overline{\Delta}(X)}.
\end{equation}

Inequality (\ref{EqB}) extends to the uncountable case Theorem \ref{B}. It follows from (\ref{Eq3}) that 
$2^{2^\omega}$ is an upper bound for the cardinality of weakly Lindel\"of spaces with regular 
$G_\delta$-diagonals. This shows again that ``regular $G_\delta$-diagonal'' is a much more restrictive 
property than ``$G_\delta$-diagonal''.

It follows from (\ref{Eq4}) that when $X$ is a space with a regular $G_\delta$-diagonal then 
$|X|\le wL(X)^{\chi(X)}$. This improves significantly Bell, Ginsburg and Woods inequality for the class of 
normal spaces with regular $G_\delta$-diagonals. In particular (\ref{Eq4}) shows that the cardinality of 
every first countable space with a regular $G_\delta$-diagonal does not exceed $wL(X)^\omega$.

For the class of spaces with regular $G_\delta$-diagonals (\ref{Eq5}) improves Bella and Cammaroto 
inequality $|X|\le 2^{\chi(X)\cdot aL(X)}$, which is valid for all Urysohn spaces \cite{BelCam88}. Also, it 
follows from (\ref{Eq5}) that the cardinality of every space with a regular $G_\delta$-diagonal does not 
exceed $aL(X)^\omega$. 

\section{Definitions}\label{s2}

Throughout this paper $\omega$ is (the cardinality of) the set of all non-negative integers, $\xi$ and 
$\eta$ are ordinals and $\tau$, $\mu$ and $\kappa$ are infinite cardinals. The cardinality of the set $X$ 
is denoted by $|X|$ and $\Delta_X = \{(x,x)\in X^2:x\in X\}$ is the \emph{diagonal} of $X$. If $\sU$ is 
a family of subsets of $X$, $x\in X$, and 
$G\subset X$ then  $\st(G,\sU) = \bigcup\{U\in \sU:U\cap G\neq \emptyset\}$. 
When $G=\{x\}$ we write $\st(x,\sU)$ instead of $\st(\{x\},\sU)$. If $n\in\omega$, 
$\st^n(G,\sU)=\st(\st^{n-1}(G,\sU),\sU)$ and $\st^0(G,\sU)=G$.

All spaces are assumed to be topological $T_1$-spaces. For a subset $U$ of a space $X$ the closure of 
$U$ (in $X$) is denoted by $\overline{U}$. $F\subset X$ is called \emph{regular-closed} (in $X$) if there 
is open $U\subset X$ such that $F=\overline{U}$. As usual, $\chi(X)$ and $\psi(X)$ denote respectively 
the character and the pseudocharacter of $X$. The \emph{closed pseudo-character} $\psi_c(X)$ 
(defined only for Hausdorff spaces $X$)  is the smallest infinite cardinal $\kappa$ such that for each 
$x\in X$, there is a collection $\{V(\eta,x):\eta<\kappa\}$ of open neighborhoods of $x$ such that 
$\bigcap_{\eta<\kappa}\overline{V}(\eta,x) = \{x\}$ \cite{Schr93}. 
The \emph{Hausdorff pseudo-character} of X, denoted $H\psi(X)$, is the smallest infinite cardinal 
$\kappa$ such that for each $x \in X$, there is a collection $\{V(\eta,x): \eta <\kappa\}$ of open 
neighborhoods of $x$ such that if $x \neq y$, then there exists $\eta,\xi < \kappa$ such that 
$V(\eta,x)\cap V(\xi,y) = \emptyset$ \cite{Hodel06}. 

The \emph{Lindel\"off number} of $X$ is $L(X)=\min\{\kappa:$ every open cover of $X$ has a 
subcover of cardinality $\leq\kappa\}+\omega$. The \emph{weak Lindel\"off number} of $X$, denoted 
$wL(X)$, is the smallest infinite cardinal $\kappa$ such that every open cover of $X$ has a subcollection 
of cardinality $\le\kappa$ whose union is dense in $X$. If $wL(X)=\omega$ then $X$ is called 
\emph{weakly Lindel\"of}. The \emph{almost Lindel\"off number} of $X$, denoted $aL(X)$, is the 
smallest infinite cardinal $\kappa$ such that for every open cover $\sU$ of $X$ there is a subcollection 
$\sU_0$ such that $|\sU_0|\le\kappa$ and $\bigcup\{\overline{U}:U\in\sU_0\}=X$. If $aL(X)=\omega$ 
then $X$ is called \emph{almost Lindel\"of}. 
$e(X)=\sup\{|D|:D\subseteq X$ is closed and discrete$\}+\omega$ is the \emph{extent} of $X$. 
A pairwise disjoint collection of non-empty open sets in $X$ is called a \emph{cellular family}. The 
\emph{cellularity} of $X$ is $c(X)=\sup\{|\sU|:\sU$ a cellular family in $X\}+\omega$. If 
$c(X)=\omega$ then it is called that $X$ satisfies the \emph{countable chain condition} (or has the 
\emph{ccc}) property. 

A space $X$ has a \emph{$G_\kappa$-diagonal} if there is a family $(U_\eta:\eta<\kappa)$ of open sets 
in $X^2$ such that $\Delta_X=\bigcap_{\eta<\kappa} U_\eta$; if  
$\Delta_X = \bigcap_{\eta<\kappa} U_\eta =  \bigcap_{\eta<\kappa} \overline{U}_\eta$ then 
$X$ has a \emph{regular $G_\kappa$-diagonal}. When $\kappa=\omega$ then 
$X$ has a $G_\delta$-diagonal (respectively, regular $G_\delta$-diagonal).  The \emph{diagonal 
degree} of $X$, denoted $\Delta(X)$, is the smallest infinite cardinal $\kappa$ such that $X$ has a 
$G_\kappa$-diagonal (hence $\Delta(X)=\omega$ if and only if $X$ has a $G_\delta$-diagonal).

The following observation is well-known and easy to prove (see e.g. \cite[Lemma 4.6]{ComGot09}).

\begin{lemma}
$X$ has a diagonal which is the intersection of some of its regular-closed neighborhoods if and only if 
$X$ is a Urysohn space.
\end{lemma}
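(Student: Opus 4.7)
The plan is to prove both implications directly from the definitions, without needing any cardinal-function machinery.

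For the forward direction, suppose there is a family $(U_\eta)_{\eta}$ of open sets in $X^2$ with $\Delta_X \subseteq U_\eta$ for every $\eta$ and $\bigcap_\eta \overline{U}_\eta = \Delta_X$. Given distinct $x,y \in X$, the point $(x,y)$ lies outside some $\overline{U}_\eta$, so I can pick a basic open box $V \times W \ni (x,y)$ with $(V \times W) \cap \overline{U}_\eta = \emptyset$. I would then claim $\overline{V} \cap \overline{W} = \emptyset$. Assuming for contradiction some $z \in \overline{V} \cap \overline{W}$, the point $(z,z)$ lies in $\Delta_X \subseteq U_\eta$, so there is a basic open box $A \times B \subseteq U_\eta$ with $(z,z) \in A \times B$. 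Since $A$ is an open neighborhood of $z \in \overline{V}$ one has $A \cap V \neq \emptyset$, and likewise $B \cap W \neq \emptyset$; any $(v,w)$ with $v \in A \cap V$ and $w \in B \cap W$ belongs to $(V \times W) \cap U_\eta$, the desired contradiction. Hence $V$ and $W$ witness that $x,y$ have disjoint closed neighborhoods.

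For the backward direction, assume $X$ is Urysohn. For every ordered pair of distinct points $(x,y)$, use the Urysohn property to select open sets $V_{x,y} \ni x$ and $W_{x,y} \ni y$ with $\overline{V}_{x,y} \cap \overline{W}_{x,y} = \emptyset$, and put
\[
U_{(x,y)} := X^2 \setminus \bigl(\overline{V}_{x,y} \times \overline{W}_{x,y}\bigr).
\]
This set is open, and it contains $\Delta_X$ precisely because $\overline{V}_{x,y}$ and $\overline{W}_{x,y}$ are disjoint, so no diagonal point lies in their product. Moreover $(x,y) \in V_{x,y} \times W_{x,y}$, an open subset of $\overline{V}_{x,y} \times \overline{W}_{x,y}$, which places $(x,y)$ in the interior of the complement of $U_{(x,y)}$ and hence outside $\overline{U}_{(x,y)}$. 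Running $(x,y)$ over all pairs with $x \neq y$ therefore produces a family of open neighborhoods of $\Delta_X$ whose closures intersect in exactly $\Delta_X$.

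The only step that requires some care is deriving the strong separation $\overline{V} \cap \overline{W} = \emptyset$ in the forward implication from the a priori weaker disjointness $(V \times W) \cap \overline{U}_\eta = \emptyset$; the key is that $\Delta_X$ sits inside the \emph{open} set $U_\eta$, which leaves enough room around $(z,z)$ to produce a point of $V \times W$ that lies in $U_\eta$. Everything else is routine manipulation of products and closures.
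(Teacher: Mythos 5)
Your proof is correct. The paper itself does not prove this lemma --- it dismisses it as well known and points to \cite{ComGot09} --- and your argument is exactly the standard direct verification that fills this in: in the forward direction the disjointness of the closures of $V$ and $W$ is extracted from the fact that the diagonal point $(z,z)$ sits inside the open set $U_\eta$ (or even just inside the interior of $\overline{U}_\eta$, so the argument also covers the literal reading of ``regular-closed neighborhood''), and in the backward direction the sets $X^2\setminus(\overline{V}_{x,y}\times\overline{W}_{x,y})$ are open neighborhoods of $\Delta_X$ whose closures miss $(x,y)$, which is precisely the family required by the paper's definition of a regular $G_\kappa$-diagonal. No gaps; both implications are complete.
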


\begin{definition}
For a Urysohn space $X$ we define the \emph{regular diagonal degree} $\overline{\Delta}(X)$ of $X$ to 
be the minimal infinite cardinal $\kappa$ such that $X$ has a regular $G_\kappa$-diagonal.
\end{definition}

Let $n$ be a positive integer. $X$ has a \emph{rank $n$-diagonal} (a \emph{strong rank $n$-diagonal}) 
if there is a sequence $\{\sU_m : m <\omega\}$ of open covers of $X$ such that for all $x\ne y$, there 
is some $m<\omega$ such that $y\notin \st^n(x, \sU_m)$ ($y\notin \overline{\st^n(x,\sU_m)}$) 
(\cite{ArhBuz06}, \cite{BBR14}). Spaces $X$ with rank $n$-diagonals and strong rank $n$-diagonals 
were introduced and first studied in \cite{Ishii70} under the names ``spaces with 
$G_\delta(n)$-diagonals" and ``spaces with $\overline{G}_\delta(n)$-diagonals". Clearly the spaces with 
strong rank 1-diagonals or, equivalently, the spaces with  $\overline{G}_\delta(1)$-diagonals, are exactly 
the spaces with $G^*_\delta$-diagonals introduced and studied in \cite{Hodel71}.

The \emph{rank} (\emph{strong rank}) of the diagonal of a space $X$ is defined as the greatest natural 
number $n$ such that $X$ has a rank $n$-diagonal (strong rank $n$-diagonal), if such a number $n$ 
exists. The rank (strong rank) of the diagonal of $X$ is infinite, if $X$ has a rank $n$-diagonal (strong 
rank $n$-diagonal) for every $n\ge 1$ (\cite{ArhBuz06}, \cite{BBR14}).

Condensations are one-to-one and onto continuous mappings. A space $X$ is \emph{submetrizable} if it 
condenses onto a metrizable space \cite{ArhBuz06}, or equivalently, $(X,\tau)$ is \emph{submetrizable} 
if there exists a topology $\tau'$ on $X$ such that $\tau'\subset \tau$ and $(X,\tau')$ is metrizable 
\cite{Gru84}.

A sequence $(\sU_n:n<\omega)$ of open covers of a space $X$ is a \emph{development} for $X$ if for 
each $x\in X$, the set $\{\st(x,\sU_n):n<\omega\}$ is a base at $x$. A \emph{developable space} is a 
space that has a development. A \emph{Moore space} is a regular developable space \cite{Gru84}.

For definitions not given here we refer the reader to \cite{E89}, \cite{Juhasz80} or \cite{Hodel84}.

\section{Preliminary results}\label{s3}

The following three observations are well-known. 

\begin{lemma}
For every Hausdorff space $X$ 
$$\psi(X)\leq \psi_c(X)\leq H\psi(X)\leq \chi(X).$$
\end{lemma}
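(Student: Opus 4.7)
The plan is to verify each of the three inequalities separately, working from the definitions; everything reduces to short unpackings of what the relevant local families witness.

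For $\psi(X) \le \psi_c(X)$, I would take $\kappa = \psi_c(X)$ and, for each $x \in X$, the family $\{V(\eta,x): \eta < \kappa\}$ of open neighborhoods of $x$ with $\bigcap_{\eta<\kappa} \overline{V(\eta,x)} = \{x\}$. The sandwich
\[
\{x\} \subseteq \bigcap_{\eta<\kappa} V(\eta,x) \subseteq \bigcap_{\eta<\kappa} \overline{V(\eta,x)} = \{x\}
\]
shows that the same family witnesses $\psi(X) \le \kappa$.

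For $\psi_c(X) \le H\psi(X)$, let $\kappa = H\psi(X)$ and fix, at each $x \in X$, the family $\{V(\eta,x): \eta<\kappa\}$ provided by the definition of $H\psi(X)$. To show $\bigcap_{\eta<\kappa} \overline{V(\eta,x)} = \{x\}$, I would take any $y \ne x$ and, by the defining property, choose $\eta,\xi < \kappa$ with $V(\eta,x) \cap V(\xi,y) = \emptyset$. Since $V(\xi,y)$ is an open neighborhood of $y$ disjoint from $V(\eta,x)$, we get $y \notin \overline{V(\eta,x)}$, and hence $y$ is not in the intersection; so the family witnesses $\psi_c(X) \le \kappa$.

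For $H\psi(X) \le \chi(X)$, let $\kappa = \chi(X)$ and fix, at each $x\in X$, a local base $\{V(\eta,x): \eta<\kappa\}$. If $x \ne y$, then because $X$ is Hausdorff there are disjoint open sets $U \ni x$ and $W \ni y$; choose $\eta,\xi<\kappa$ with $V(\eta,x) \subseteq U$ and $V(\xi,y) \subseteq W$, so $V(\eta,x) \cap V(\xi,y) = \emptyset$. This verifies the defining property of $H\psi(X)$ with cardinal $\kappa$.

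None of the three steps really is the \emph{hard part}; the only mild subtlety is in the middle inequality, where one must remember that $y \notin \overline{V(\eta,x)}$ is witnessed by an \emph{arbitrary} open neighborhood of $y$ disjoint from $V(\eta,x)$, so the asymmetric choice of $\xi$ (depending on $\eta$) in the definition of $H\psi(X)$ suffices.
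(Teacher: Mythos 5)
Your proof is correct: each of the three inequalities is verified exactly as intended, and your use of the Hausdorff hypothesis (only in the step $H\psi(X)\le\chi(X)$, via disjoint basic neighborhoods) is the standard argument. The paper states this lemma as a well-known observation without proof, so your direct definitional verification supplies precisely what is implicitly assumed there.
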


\begin{lemma}
$wL(X)\le aL(X) \le L(X)$.
\end{lemma}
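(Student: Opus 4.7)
The plan is to unpack the three definitions and verify the two inequalities directly from them; both are essentially one-line arguments, so the ``main obstacle'' is just being careful about the distinction between a dense subcollection and a subcollection whose closures cover.

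For $aL(X)\le L(X)$: I would start with an arbitrary open cover $\sU$ of $X$. By definition of $L(X)$ there exists a subcover $\sU_0\subseteq\sU$ of cardinality $\le L(X)$ with $\bigcup\sU_0=X$. Since $U\subseteq\overline{U}$ for each $U\in\sU_0$, we get $\bigcup\{\overline{U}:U\in\sU_0\}\supseteq\bigcup\sU_0=X$. Thus $\sU_0$ witnesses that the almost Lindel\"of number is at most $|\sU_0|\le L(X)$, and taking the supremum over $\sU$ yields $aL(X)\le L(X)$.

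For $wL(X)\le aL(X)$: Again let $\sU$ be an open cover of $X$. By definition of $aL(X)$ there exists $\sU_0\subseteq\sU$ with $|\sU_0|\le aL(X)$ and $\bigcup\{\overline{U}:U\in\sU_0\}=X$. Using the standard inclusion $\bigcup\{\overline{U}:U\in\sU_0\}\subseteq\overline{\bigcup\sU_0}$, we conclude that $\overline{\bigcup\sU_0}=X$, i.e. $\bigcup\sU_0$ is dense in $X$. Hence $\sU_0$ witnesses the weak Lindel\"of condition, giving $wL(X)\le|\sU_0|\le aL(X)$.

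Since both inequalities are immediate from the definitions, no induction, closing-off construction, or cardinal arithmetic is required; the only subtle point, and the one I would emphasize in the write-up, is the passage from ``the closures cover'' to ``the union is dense,'' which is exactly the elementary containment $\bigcup_\alpha\overline{A_\alpha}\subseteq\overline{\bigcup_\alpha A_\alpha}$.
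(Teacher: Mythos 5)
Your argument is correct and is exactly the standard one-line verification; the paper itself states this lemma as a well-known observation without proof, and your two steps (a subcover's closures cover, and $\bigcup_\alpha\overline{A_\alpha}\subseteq\overline{\bigcup_\alpha A_\alpha}$ turns a closed cover by closures into a dense union) are the intended justification.
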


\begin{lemma}[\cite{Hodel84}]
$wL(X)\leq c(X)$.
\end{lemma}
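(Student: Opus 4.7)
The statement to prove is $wL(X)\le c(X)$, a classical Hausdorff--style inequality comparing the weak Lindelöf number with cellularity. The plan is to start with an arbitrary open cover and extract a dense subfamily by invoking a maximal disjoint refinement, whose size is automatically bounded by $c(X)$.

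More precisely, fix an open cover $\sU$ of $X$ and let $\kappa=c(X)$. Consider the collection $\sW$ of all nonempty open sets $W\subseteq X$ such that $W\subseteq U$ for some $U\in\sU$. By Zorn's Lemma, choose a maximal pairwise disjoint subfamily $\sV\subseteq\sW$. Then $\sV$ is a cellular family in $X$, so $|\sV|\le c(X)=\kappa$. For each $V\in\sV$, pick (via the Axiom of Choice) some $U_V\in\sU$ with $V\subseteq U_V$, and form the subcollection $\sU_0=\{U_V:V\in\sV\}\subseteq\sU$, which has cardinality at most $\kappa$.

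The only step requiring a small argument is that $\bigcup\sU_0$ is dense in $X$. Suppose not; then there is a nonempty open set $O\subseteq X$ disjoint from $\bigcup\sU_0$. Since $\sU$ covers $X$, choose $x\in O$ and $U\in\sU$ with $x\in U$. Then $O\cap U$ is a nonempty open set contained in $U\in\sU$, hence $O\cap U\in\sW$. By maximality of $\sV$, there exists $V\in\sV$ with $(O\cap U)\cap V\ne\emptyset$. But $V\subseteq U_V\in\sU_0$, so $O\cap U_V\ne\emptyset$, contradicting $O\cap\bigcup\sU_0=\emptyset$.

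The only mild obstacle is confirming that the maximal disjoint refinement actually produces a \emph{dense} union (not merely a covering of a dense subset up to closure of its own members); this is handled by the last paragraph above, where the maximality of $\sV$ inside $\sW$ is precisely what is needed. Since $\sU$ was an arbitrary open cover and we exhibited a dense subfamily of cardinality at most $\kappa=c(X)$, we conclude $wL(X)\le c(X)$.
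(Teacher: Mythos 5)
Your proof is correct: taking a maximal pairwise disjoint refinement of the cover and using its maximality to get density of the chosen members of $\sU$ is exactly the standard argument for $wL(X)\le c(X)$, and the small case where $O\cap U$ itself belongs to $\sV$ is covered by your phrasing. The paper offers no proof of its own (it cites this as a well-known observation from Hodel's Handbook chapter), and your argument is essentially the classical one given there.
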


In 1961 Ceder made the following observation.

\begin{lemma}[\cite{Ceder61}]\label{Ceder}
A space $X$ has a $G_\delta$-diagonal if and only if there is a sequence $(\sU_n:n<\omega)$ of open 
covers of $X$ such that if $x\in X$, then $\{x\} = \bigcap_{n<\omega} \st(x,\sU_n)$. 
\end{lemma}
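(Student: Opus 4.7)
The plan is a standard two-direction argument passing between open covers of $X$ and open neighborhoods of the diagonal in $X^2$.

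For the forward direction, I would start from a representation $\Delta_X=\bigcap_{n<\omega}U_n$ with each $U_n$ open in $X^2$. Using that the sets of the form $V\times V$ (with $V$ open in $X$) form a neighborhood base of $\Delta_X$ in $X^2$, for each $x\in X$ and each $n<\omega$ I would pick an open $B_n(x)\ni x$ with $B_n(x)\times B_n(x)\subseteq U_n$, and then set $\sU_n=\{B_n(x):x\in X\}$, which is an open cover of $X$. The point is that if $y\neq x$, then $(x,y)\notin U_n$ for some $n$; if $y$ were in $\st(x,\sU_n)$, there would be some $z$ with $\{x,y\}\subseteq B_n(z)$, which forces $(x,y)\in B_n(z)\times B_n(z)\subseteq U_n$, a contradiction. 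Hence $\bigcap_{n<\omega}\st(x,\sU_n)=\{x\}$ for every $x\in X$.

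For the reverse direction, given covers $(\sU_n:n<\omega)$ with the stated star property, I would put
\[
U_n=\bigcup_{U\in\sU_n}U\times U,
\]
which is open in $X^2$ and contains $\Delta_X$ because $\sU_n$ covers $X$. If $(x,y)\in\bigcap_{n<\omega}U_n$, then for every $n$ there exists $U\in\sU_n$ with $\{x,y\}\subseteq U$, so $y\in\st(x,\sU_n)$ for every $n$; the hypothesis then yields $y=x$, and thus $\Delta_X=\bigcap_{n<\omega}U_n$.

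There is no real obstacle here: the whole argument hinges on the elementary observation that a point $(x,y)$ lies in $\bigcup_{U\in\sU}U\times U$ exactly when $x$ and $y$ share a common member of $\sU$, i.e.\ when $y\in\st(x,\sU)$. Both directions then reduce to rephrasing ``$(x,y)$ avoids $U_n$'' as ``$y\notin\st(x,\sU_n)$''. No separation axioms beyond $T_1$ are needed, and the argument generalizes verbatim to $\kappa$-indexed families, which is the form that will be invoked later when $\overline{\Delta}(X)$ and $\Delta(X)$ enter the main estimates.
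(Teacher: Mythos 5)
Your proof is correct, and it is essentially the paper's approach: the paper itself gives no proof of this lemma (it is cited to Ceder), but your box-versus-star translation $U_n=\bigcup\{U\times U:U\in\sU_n\}$ and the choice of $B_n(x)$ with $B_n(x)\times B_n(x)\subseteq U_n$ is exactly the $\kappa=\omega$, non-regular analogue of the argument the paper uses to prove Lemma~\ref{ZG}. One small wording fix: the sets $V\times V$ are not a neighborhood base of the whole diagonal $\Delta_X$ in $X^2$, only a neighborhood base at each point $(x,x)\in\Delta_X$, which is all your construction actually uses.
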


It is then clear that if a space $X$ has a $G_\delta$-diagonal then $\psi(X)=\omega$ and that $X$ has 
a $G_\delta$-diagonal if and only if $X$ has a rank 1-diagonal.

The case $\kappa=\omega$ of the following lemma was proved by Zenor in \cite{Zenor72}. Below we 
show that Zenor's proof works also for every $\kappa>\omega$.

\begin{lemma}\label{ZG} Let $\kappa$ be an infinite cardinal. 
A space $X$ has a regular $G_\kappa$-diagonal if and only if there is a family $(\sU_\eta:\eta<\kappa)$ 
of open covers of $X$ such that if $x$ and $y$ are distinct points of $X$, then there is $\eta<\kappa$ 
and open sets $U\in \sU_\eta$ and $V\in \sU_\eta$ containing $x$ and $y$ respectively, such that no 
member of $\sU_\eta$ intersects both $U$ and $V$. 
\end{lemma}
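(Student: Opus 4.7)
The plan is to prove both directions by relating the family of open covers $(\sU_\eta:\eta<\kappa)$ to an explicit family of open sets $(G_\eta:\eta<\kappa)$ in $X^2$, each being a union of ``squares'' $V\times V$ for $V$ in the cover, and to verify that the ``no member meets both'' property on the cover side corresponds precisely to $(x,y)$ lying outside $\overline{G}_\eta$ on the product side. The passage $\sU_\eta \leadsto G_\eta$ is used for the reverse direction, and the passage $U_\eta \leadsto \sU_\eta$ is used for the forward direction.

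For the reverse implication, given covers $\sU_\eta$ satisfying the stated property, I would set
\[
G_\eta=\bigcup\{V\times V:V\in\sU_\eta\},
\]
which is open in $X^2$ and contains $\Delta_X$ since $\sU_\eta$ covers $X$. For distinct $x,y\in X$, I would pick $\eta$ and $U,V\in\sU_\eta$ as in the hypothesis; then $U\times V$ is an open neighborhood of $(x,y)$, and for every $W\in\sU_\eta$ one has $(U\times V)\cap(W\times W)=(U\cap W)\times(V\cap W)=\emptyset$ because no $W$ meets both $U$ and $V$. This gives $(x,y)\notin\overline{G}_\eta$, and so $\Delta_X=\bigcap_\eta G_\eta=\bigcap_\eta\overline{G}_\eta$.

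For the forward implication, given open sets $U_\eta\subseteq X^2$ with $\Delta_X=\bigcap_\eta U_\eta=\bigcap_\eta\overline{U}_\eta$, I would set
\[
\sU_\eta=\{V\subseteq X:V\text{ is open and }V\times V\subseteq U_\eta\}.
\]
This is an open cover of $X$, because for each $x$ the openness of $U_\eta$ around $(x,x)$ yields a basic open $V_1\times V_2\subseteq U_\eta$ with $x\in V_1\cap V_2$, so $V_1\cap V_2\in\sU_\eta$. For distinct $x,y$, choose $\xi$ with $(x,y)\notin\overline{U}_\xi$ and pick open $W_1\ni x$, $W_2\ni y$ with $(W_1\times W_2)\cap U_\xi=\emptyset$. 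Using that $\sU_\xi$ is downward closed under taking smaller squares (intersecting with $W_1$ or $W_2$ preserves the defining property), I can find $U,V\in\sU_\xi$ with $x\in U\subseteq W_1$ and $y\in V\subseteq W_2$. If some $W\in\sU_\xi$ met both $U$ and $V$, then points $a\in W\cap U$ and $b\in W\cap V$ would give $(a,b)\in W\times W\subseteq U_\xi$ while $(a,b)\in W_1\times W_2$, contradicting disjointness.

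The main obstacle is choosing the right definition of $\sU_\eta$ in the forward direction: picking, for each $x$, a single neighborhood $W(x,\eta)$ with $W(x,\eta)^2\subseteq U_\eta$ is not enough, since such a fixed $W(x,\eta)$ need not be small enough to lie inside the separating neighborhood $W_1$ that only appears after the pair $(x,y)$ is given. Taking \emph{all} open $V$ with $V\times V\subseteq U_\eta$ as members of $\sU_\eta$ solves this issue by ensuring that arbitrarily small such squares around $x$ and $y$ are already in the cover, which is exactly what is needed for the star-type separation argument to go through and to mirror Zenor's countable construction in the uncountable setting.
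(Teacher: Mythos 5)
Your proof is correct and takes essentially the same route as the paper (Zenor's argument): in the forward direction you use the cover $\sU_\eta=\{V: V \text{ open},\ V\times V\subseteq U_\eta\}$ and in the reverse direction the open set $\bigcup\{V\times V: V\in\sU_\eta\}$, which are exactly the paper's constructions, with the same contradiction argument via a point $(a,b)\in W\times W$. The only difference is that you make explicit the small refinement step (intersecting a cover element with the separating box $W_1$ or $W_2$) that the paper's proof leaves implicit.
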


\begin{proof} 
Suppose that $X$ has a regular $G_\kappa$-diagonal and let $(W_\eta:\eta<\kappa)$ be
a family of open sets in $X^2$ such that 
$\Delta_X= \bigcap_{\eta<\kappa}W_\eta = \bigcap_{\eta<\kappa}\overline{W}_\eta$. For each 
$\eta<\kappa$, let $\sU_\eta = \{U: U$ is an open subset of $X$ such that 
$U\times U\subset W_\eta\}$. To see that the family $(\sU_\eta:\eta<\kappa)$ is as required
let $x$ and $y$ be a pair of distinct points of $X$. Then there exists $\eta<\kappa$ such 
that $(x, y)$ is not in $\overline{W}_\eta$ and open sets $U$ and $V$ in $X$, which contain $x$ 
and $y$ respectively, such that $U\times V$ does not intersect $W_\eta$, 
$U\times U \subset W_\eta$, and $V\times V\subset W_\eta$. To see that no member of $\sU_\eta$ 
intersects both $U$ and $V$, suppose otherwise; that is, suppose that $W$ is a member of $\sU_\eta$, 
$p$ is a point of $W\cap U$ and $q$ is a point of $W\cap V$. Then $(p,q)$ is a point of 
$W_\eta \cap(U \times V)$, which is a contradiction.

Now, suppose that $\sU_\eta$ is a family of open covers of $X$ as described in the lemma. For each 
$\eta<\kappa$, let $W_\eta = \bigcup_{\eta<\kappa}\{U \times U: U\in \sU_\eta\}$. Clearly, 
$\Delta_X\subset \bigcap_{\eta<\kappa}W_\eta$. To see that 
$\Delta_X= \bigcap_{\eta<\kappa}\overline{W}_\eta$, let $x$ and $y$ be distinct points of $X$. Then 
there is $\eta<\kappa$ and open sets $U,V\in \sU_\eta$ containing $x$ and $y$ respectively, such that 
no member of $\sU_\eta$ intersects both $U$ and $V$. It must be the case that $W_\eta$ does not 
intersect $U\times V$.
\end{proof}

In fact the proof of Lemma \ref{ZG} shows a little bit more. 

\begin{corollary}\label{C1}
If a space $X$ has a regular $G_\kappa$-diagonal, for some infinite cardinal $\kappa$, then there is a 
family $(\sU_\eta:\eta<\kappa)$ of open covers of $X$ such that

{\rm (a)} if $x$ and $y$ are distinct points of $X$, then there exist $\eta<\kappa$ and open sets 
$U_\eta(x,y),U_\eta(y,x)\in \sU_\eta$ containing $x$ and $y$ respectively, such that 
$U_\eta(y,x)\cap\overline{\st(U_\eta (x,y),\sU_\eta)}=\emptyset$;

{\rm (b)} if $x\in X$ then 
$\{x\} = \bigcap_{\eta<\kappa} \st(x,\sU_\eta)= \bigcap_{\eta<\kappa} \overline{\st(x,\sU_\eta)}$. 
\end{corollary}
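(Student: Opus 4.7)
The plan is to extract both parts of the corollary directly from the family $(\sU_\eta)_{\eta<\kappa}$ supplied by Lemma \ref{ZG}, without modifying it, by exploiting a single elementary fact about open sets and closures.

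The key observation I would invoke is that for any open $V \subset X$ and any $A \subset X$ with $V \cap A = \emptyset$, one automatically has $V \cap \overline{A} = \emptyset$: $V$ itself is a neighborhood witnessing that no point of $V$ is a limit point of $A$. The only content of (a) and (b) beyond Lemma \ref{ZG} is of this form --- ``the closure of some star misses an open set'' --- so this observation will be essentially the whole argument.

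For part (a), given distinct $x, y$ I would apply Lemma \ref{ZG} to obtain $\eta < \kappa$ and $U, V \in \sU_\eta$ with $x \in U$, $y \in V$, such that no element of $\sU_\eta$ meets both $U$ and $V$. Setting $U_\eta(x,y) := U$ and $U_\eta(y,x) := V$, the set $\st(U, \sU_\eta)$ is by definition a union of members of $\sU_\eta$ each disjoint from $V$, so $V \cap \st(U, \sU_\eta) = \emptyset$; since $V$ is open, the observation upgrades this to $V \cap \overline{\st(U, \sU_\eta)} = \emptyset$, which is exactly (a).

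For part (b), fix $x \in X$ and let $y$ be any other point. Choosing $\eta$ and $U, V$ as above, every $W \in \sU_\eta$ containing $x$ must meet $U$, so $\st(x, \sU_\eta) \subset \st(U, \sU_\eta)$, which is disjoint from $V$. Thus $V$ is an open neighborhood of $y$ disjoint from $\st(x, \sU_\eta)$, and the key observation yields $y \notin \overline{\st(x, \sU_\eta)}$. Combined with the trivial inclusions $\{x\} \subset \bigcap_{\eta<\kappa} \st(x, \sU_\eta) \subset \bigcap_{\eta<\kappa} \overline{\st(x, \sU_\eta)}$, this yields the chain of equalities in (b). The main ``obstacle'' is really just recognizing that there is no obstacle: the corollary is Lemma \ref{ZG} plus one topological sentence, which matches the author's preamble that the earlier proof ``shows a little bit more.''
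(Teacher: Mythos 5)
Your proposal is correct and follows essentially the same route as the paper: part (a) is exactly the paper's argument (the star of $U_\eta(x,y)$ misses the open set $U_\eta(y,x)$, hence so does its closure), and your derivation of (b) via $\st(x,\sU_\eta)\subset\st(U_\eta(x,y),\sU_\eta)$ is just a spelled-out version of the paper's ``(b) follows immediately from (a).''
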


\begin{proof}
Let $(\sU_\eta:\eta<\kappa)$ be a sequence of open covers of $X$ as in Lemma \ref{ZG}. 

(a) If $x,y \in X$ are distinct points then according to Lemma \ref{ZG} there is $\eta<\kappa$ and open 
sets $U_\eta(x,y), U_\eta(y,x)\in\sU_\eta$ containing $x$ and $y$ respectively, such that no member of 
$\sU_\eta$ intersects both $U_\eta(x,y)$ and $U_\eta(y,x)$. Then $y\in U_\eta(y,x)$ and 
$U_\eta (y,x)\cap \st(U_\eta (x,y),\sU_n)=\emptyset$. Therefore 
$U_\eta(y,x)\cap\overline{\st(U_\eta (x,y),\sU_\eta)}=\emptyset$.

(b) Follows immediately from (a).
\end{proof}

It follows from Corollary \ref{C1} that if a space $X$ has a regular $G_\kappa$-diagonal then 
$\psi_c(X)=\kappa$. In particular, if a space $X$ has a regular $G_\delta$-diagonal then 
$\psi_c(X)=\omega$.

Below we mention some well-known results and open questions closely related to spaces with regular 
$G_\delta$-diagonals.

\begin{lemma}[\cite{ArhBuz06}]
Every submetrizable space $X$ has a diagonal of infinite rank.
\end{lemma}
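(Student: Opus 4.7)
The plan is to exploit the metric from a coarser metrizable topology to manufacture the open covers required by the rank-$n$-diagonal condition, and in fact to do this uniformly in $n$ with a single sequence. Specifically, suppose $(X,\tau)$ is submetrizable and fix a topology $\tau' \subseteq \tau$ on $X$ generated by some metric $d$. For each positive integer $m$, set
$$\sU_m := \{B_d(z,1/m) : z \in X\}.$$
Since $\tau' \subseteq \tau$, every $d$-ball is $\tau$-open, so $\sU_m$ is a legitimate open cover of $(X,\tau)$, and the centers-of-balls remark shows it is actually a cover.

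Next I would show that this one sequence $(\sU_m)_{m < \omega}$ simultaneously witnesses the rank $n$-diagonal property for every $n \ge 1$. The engine is a routine diameter estimate: each member of $\sU_m$ has $d$-diameter at most $2/m$, so a short induction on $k$ yields $\st^k(x,\sU_m) \subseteq B_d(x,2k/m)$ for every $x \in X$ and every $k \ge 0$. Given distinct $x,y \in X$ and a target rank $n$, the number $d(x,y)$ is strictly positive; choosing any $m$ with $2n/m < d(x,y)$ then forces $y \notin \st^n(x,\sU_m)$, as required by the definition. Since this works for every $n$, the diagonal rank of $X$ is infinite.

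There is no substantive obstacle in this argument; the only thing to keep track of is that the covers live in the finer topology $\tau$, which is automatic from $\tau' \subseteq \tau$. The essential content is the metric-geometry triangle-inequality estimate controlling the diameter of $\st^k(x,\sU_m)$ by $2k/m$, which is the standard fact that a chain of $k$ balls of radius $1/m$ connecting $x$ to $y$ witnesses $d(x,y) \le 2k/m$.
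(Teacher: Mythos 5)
Your argument is correct. The paper states this lemma only as a citation to \cite{ArhBuz06} without reproducing a proof, and your construction --- covering $X$ by the $\tau$-open balls $B_d(z,1/m)$ and using the triangle-inequality estimate $\st^k(x,\sU_m)\subseteq B_d(x,2k/m)$ to separate $y$ from $\st^n(x,\sU_m)$ once $2n/m<d(x,y)$ --- is exactly the standard argument behind the cited result (and, since the $\tau$-closure of a set is contained in its $d$-closure, the same covers even witness a strong rank $n$-diagonal for every $n$).
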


\begin{corollary}[\cite{ArhBuz06}, \cite{BBR14}]
If the rank of the diagonal of a space $X$ is at least $3$, then $X$ has a strong rank 2-diagonal.
\end{corollary}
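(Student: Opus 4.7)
The plan is to show, with no change of covers, that any sequence $\{\sU_m:m<\omega\}$ of open covers witnessing a rank $3$-diagonal automatically witnesses a strong rank $2$-diagonal. The key observation is the standard equivalence: for an open cover $\sU$ of $X$ and $n\ge 1$,
\[
y\notin\st^{n}(x,\sU)\iff\st(y,\sU)\cap\st^{n-1}(x,\sU)=\emptyset,
\]
which follows directly from the definition $\st^n(x,\sU)=\st(\st^{n-1}(x,\sU),\sU)$, since membership of $y$ in $\st(\st^{n-1}(x,\sU),\sU)$ is the same as the existence of some $U\in\sU$ containing $y$ and meeting $\st^{n-1}(x,\sU)$.

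Applying this reformulation with $n=3$: suppose $X$ has a rank $3$-diagonal, witnessed by open covers $\{\sU_m:m<\omega\}$. Given distinct $x,y\in X$, pick $m<\omega$ with $y\notin\st^{3}(x,\sU_m)$. Then $\st(y,\sU_m)\cap\st^{2}(x,\sU_m)=\emptyset$. Since $\sU_m$ is an open cover, there exists $U\in\sU_m$ with $y\in U\subseteq\st(y,\sU_m)$, so $U$ is an open neighborhood of $y$ disjoint from $\st^{2}(x,\sU_m)$. This gives $y\notin\overline{\st^{2}(x,\sU_m)}$, so the same sequence $\{\sU_m:m<\omega\}$ witnesses a strong rank $2$-diagonal.

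There is no real obstacle here; the whole content of the corollary is the observation above that the complement of $\st^{n}(x,\sU)$ is automatically separated from $\st^{n-1}(x,\sU)$ by the open set $\st(y,\sU)$, allowing one to trade one layer of star for closure.
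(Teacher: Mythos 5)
Your proof is correct and follows the standard argument from the cited sources; the paper itself states this corollary without proof, quoting \cite{ArhBuz06} and \cite{BBR14}. The key equivalence $y\notin\st^{n}(x,\sU)\iff\st(y,\sU)\cap\st^{n-1}(x,\sU)=\emptyset$ is verified correctly from the definition of iterated stars, and it immediately shows that any sequence of open covers witnessing a rank $3$-diagonal (which exists whenever the rank is at least $3$) also witnesses a strong rank $2$-diagonal.
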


\begin{corollary}[\cite{ArhBuz06}, \cite{BBR14}]
If a space $X$ has a strong rank 2-diagonal, then $X$ has a regular $G_\delta$-diagonal.
\end{corollary}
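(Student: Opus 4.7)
The plan is to prove the corollary directly from the definition of the regular $G_\delta$-diagonal, by writing down an explicit family of open neighborhoods of $\Delta_X$ in $X^2$ built from the given strong rank 2-diagonal covers. Let $(\sU_m:m<\omega)$ witness the strong rank 2-diagonal, so for every pair of distinct points $x,y\in X$ there is some $m<\omega$ with $y\notin \overline{\st^2(x,\sU_m)}$. I will set
\[
 W_m := \bigcup\{\,U\times U : U\in\sU_m\,\}\subset X^2
\]
and aim to show $\Delta_X=\bigcap_{m<\omega}W_m=\bigcap_{m<\omega}\overline{W}_m$.

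First I would verify the easy inclusions: each $W_m$ is open in $X^2$ and, since $\sU_m$ covers $X$, every point $(x,x)\in\Delta_X$ lies in some $U\times U\subset W_m$, so $\Delta_X\subset \bigcap_m W_m\subset \bigcap_m\overline{W}_m$. The work is in the reverse inclusion $\bigcap_m\overline{W}_m\subset\Delta_X$, i.e. showing that for any $x\neq y$ there exists $m$ with $(x,y)\notin \overline{W}_m$.

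For this, I fix distinct $x,y$ and choose $m$ such that $y\notin \overline{\st^2(x,\sU_m)}$. Then I pick any $U_0\in\sU_m$ with $x\in U_0$. The key small calculation is the inclusion $\st(U_0,\sU_m)\subset \st^2(x,\sU_m)$: indeed, $U_0\subset \st(x,\sU_m)$, so any $U\in\sU_m$ meeting $U_0$ also meets $\st(x,\sU_m)$ and hence lies in $\st^2(x,\sU_m)$. Consequently $y\notin \overline{\st(U_0,\sU_m)}$, and the set $V_0:=X\setminus \overline{\st(U_0,\sU_m)}$ is an open neighborhood of $y$ disjoint from $\st(U_0,\sU_m)$. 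Then $U_0\times V_0$ is an open neighborhood of $(x,y)$ in $X^2$, and I claim it misses $W_m$: if $(p,q)\in(U_0\times V_0)\cap W_m$ then $p,q\in U$ for some $U\in\sU_m$; since $p\in U\cap U_0$, the set $U$ is contained in $\st(U_0,\sU_m)$, forcing $q\in \st(U_0,\sU_m)\cap V_0=\emptyset$, a contradiction. Thus $(x,y)\notin\overline{W}_m$, which completes the proof.

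There is no real obstacle here beyond correctly bookkeeping the star operator; the only nontrivial step is noticing that one must pass from the \emph{point} star condition $y\notin\overline{\st^2(x,\sU_m)}$ to a \emph{set} star condition $y\notin \overline{\st(U_0,\sU_m)}$ for a specific member $U_0\in\sU_m$ containing $x$, so as to get matching open neighborhoods $U_0\times V_0$ that carve $(x,y)$ away from $W_m$. It is precisely the closure in the hypothesis ``strong rank 2'' (rather than just rank 2) that makes $V_0$ open and hence the product $U_0\times V_0$ a usable witness; without that closure the argument would only yield $(x,y)\notin W_m$, not $(x,y)\notin\overline{W}_m$.
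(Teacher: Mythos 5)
Your proof is correct, and it is the standard argument; the paper itself does not prove this corollary (it is quoted from \cite{ArhBuz06} and \cite{BBR14}), but your construction $W_m=\bigcup\{U\times U:U\in\sU_m\}$ is exactly the device used in the second half of the paper's proof of Lemma \ref{ZG}, with the separation of $(x,y)$ from $\overline{W}_m$ now supplied by the strong rank~2 hypothesis via the correct star bookkeeping $U_0\subset\st(x,\sU_m)$, hence $\st(U_0,\sU_m)\subset\st^2(x,\sU_m)$. One small quibble with your closing remark: $V_0=X\setminus\overline{\st(U_0,\sU_m)}$ is open regardless of whether the hypothesis carries a closure; what ``strong'' rank~2 actually buys is that $y$ lies in $V_0$ at all (with mere rank~2 one only knows $y\notin\st^2(x,\sU_m)$, so $y$ could still be in $\overline{\st(U_0,\sU_m)}$), and this distinction is genuine, since Moore spaces have rank 2-diagonals but need not have regular $G_\delta$-diagonals (Lemmas \ref{L2} and \ref{L1}).
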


\begin{corollary}[\cite{Gru84}]
Every submetrizable space $X$ has a regular $G_\delta$-diagonal.
\end{corollary}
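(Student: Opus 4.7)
The plan is to lift a regular $G_\delta$-diagonal from a weaker metrizable topology up to $(X,\tau)$ through the identity map. Since $X$ is submetrizable, there is a metrizable topology $\tau'$ on $X$ with $\tau'\subset \tau$; equivalently, $\mathrm{id}:(X,\tau)\to(X,\tau')$ is a continuous bijection, so the product map $\mathrm{id}\times\mathrm{id}:(X^2,\tau\times\tau)\to(X^2,\tau'\times\tau')$ is also a continuous bijection that sends $\Delta_X$ onto $\Delta_X$. Because the map is continuous, preimages of $\tau'\times\tau'$-open sets are $\tau\times\tau$-open and preimages of $\tau'\times\tau'$-closed sets are $\tau\times\tau$-closed.

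Next I would verify the metric case: if $d$ is a metric generating $\tau'$, set $U_n=\{(x,y)\in X^2:d(x,y)<1/n\}$. Each $U_n$ is an open neighborhood of $\Delta_X$ in $(X^2,\tau'\times\tau')$, and the $\tau'\times\tau'$-closure of $U_n$ is contained in $\{(x,y):d(x,y)\le 1/n\}$. Hence
\[
\Delta_X\subset\bigcap_{n<\omega}\mathrm{cl}_{\tau'\times\tau'}U_n\subset\bigcap_{n<\omega}\{(x,y):d(x,y)\le 1/n\}=\Delta_X,
\]
so $(X,\tau')$ has a regular $G_\delta$-diagonal witnessed by $(U_n)_{n<\omega}$.

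Finally I would transfer this to $(X,\tau)$. Let $V_n=(\mathrm{id}\times\mathrm{id})^{-1}(U_n)$; then $V_n$ is $\tau\times\tau$-open and contains $\Delta_X$. Using continuity, $\mathrm{cl}_{\tau\times\tau}V_n\subset(\mathrm{id}\times\mathrm{id})^{-1}(\mathrm{cl}_{\tau'\times\tau'}U_n)$, and therefore
\[
\Delta_X\subset\bigcap_{n<\omega}\mathrm{cl}_{\tau\times\tau}V_n\subset(\mathrm{id}\times\mathrm{id})^{-1}\!\Bigl(\bigcap_{n<\omega}\mathrm{cl}_{\tau'\times\tau'}U_n\Bigr)=(\mathrm{id}\times\mathrm{id})^{-1}(\Delta_X)=\Delta_X.
\]
This exhibits the family $(V_n)_{n<\omega}$ as a regular $G_\delta$-diagonal sequence for $X$, completing the proof.

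There is essentially no serious obstacle here; the only point requiring care is the direction of the closure inclusion under a continuous map, which is precisely what makes the pullback of a regular $G_\delta$-diagonal stay a regular $G_\delta$-diagonal. If one preferred an alternative route, one could instead use Lemma \ref{ZG}: take the covers of $(X,\tau')$ by balls of radius $1/(3n)$, observe that they satisfy Zenor's separation condition metrically, and note that each such cover is still a $\tau$-open cover of $X$ satisfying the same condition, since the separation in Zenor's lemma depends only on set-theoretic intersections of the covering members.
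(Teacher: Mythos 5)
Your proof is correct. The key inclusion you isolate, $\cl_{\tau\times\tau}V_n\subset(\mathrm{id}\times\mathrm{id})^{-1}\bigl(\cl_{\tau'\times\tau'}U_n\bigr)$, is exactly the right observation: since $\tau'\subset\tau$, closures in the finer product topology are contained in closures in the coarser one, so the metric witnesses $U_n=\{(x,y):d(x,y)<1/n\}$ pull back to a family witnessing $\Delta_X=\bigcap_n V_n=\bigcap_n\overline{V}_n$ in $(X,\tau)$, which is the paper's definition of a regular $G_\delta$-diagonal. Note, however, that your route differs from how the paper arrives at this statement: the paper does not prove it directly but cites Gruenhage and presents it as the last link of a chain of quoted results — every submetrizable space has a diagonal of infinite rank (Arhangel'skii–Buzyakova), rank at least $3$ implies a strong rank $2$-diagonal, and a strong rank $2$-diagonal implies a regular $G_\delta$-diagonal. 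Your direct pullback argument is more elementary and self-contained, while the paper's route yields strictly more information about submetrizable spaces (infinite diagonal rank, strong rank $2$), which is relevant to the rank-of-diagonal questions discussed later in Section 3. Your alternative sketch via Lemma \ref{ZG} with covers by $\tau'$-balls also works, with the small bookkeeping point that for distinct $x,y$ one must choose $n$ with, say, $4/(3n)\le d(x,y)$ so that no ball of radius $1/(3n)$ can meet both $B(x,1/(3n))$ and $B(y,1/(3n))$.
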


\begin{example}[{\cite[Example 2.9]{ArhBuz06}}]\label{EAB}
There exists a separable Tychonoff Moore space with a rank 3-diagonal (hence with a regular 
$G_\delta$-diagonal) that is not submetrizable. 
\end{example}

\begin{lemma}[\cite{ArhBuz06}]\label{L2}
Every Moore space $X$ has a rank $2$-diagonal.
\end{lemma}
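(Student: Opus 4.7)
The plan is to show directly from the definitions that a development, after a mild nesting refinement, witnesses a rank 2-diagonal. I would start with a development $(\sU_n:n<\omega)$ for the Moore space $X$ and, without loss of generality, replace it with the sequence of iterated common refinements $\sU'_n=\{\,U_0\cap\cdots\cap U_n:U_i\in\sU_i\,\}\setminus\{\emptyset\}$. Since $\sU'_n$ refines each $\sU_k$ for $k\le n$, one has $\st(x,\sU'_n)\subseteq\st(x,\sU_n)$, so $(\sU'_n)$ is still a development; moreover $(\sU'_n)$ is nested in the sense that $\sU'_{n+1}$ refines $\sU'_n$, hence the stars satisfy $\st(x,\sU'_{n+1})\subseteq\st(x,\sU'_n)$ and we may take maxima freely.

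Given two distinct points $x,y\in X$, I would invoke regularity of $X$ (guaranteed by the definition of Moore space in the excerpt) to produce an open set $W$ with $x\in W\subseteq\overline{W}\subseteq X\setminus\{y\}$; in particular $y\notin\overline{W}$, so $X\setminus\overline{W}$ is an open neighborhood of $y$. Using the development property at $x$, choose $n_1$ with $\st(x,\sU'_{n_1})\subseteq W$, and using it at $y$, choose $n_2$ with $\st(y,\sU'_{n_2})\subseteq X\setminus\overline{W}$. Set $n=\max(n_1,n_2)$; by nesting, $\st(x,\sU'_n)\subseteq W$ and $\st(y,\sU'_n)\subseteq X\setminus\overline{W}$.

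Finally, suppose for contradiction that $y\in\st^2(x,\sU'_n)$. Then there is $V\in\sU'_n$ with $y\in V$ and $V\cap\st(x,\sU'_n)\neq\emptyset$. From $y\in V$ we get $V\subseteq\st(y,\sU'_n)\subseteq X\setminus\overline{W}$, so $V\cap W=\emptyset$; but $\st(x,\sU'_n)\subseteq W$, contradicting $V\cap\st(x,\sU'_n)\neq\emptyset$. Hence $y\notin\st^2(x,\sU'_n)$, and $(\sU'_n)$ is a rank $2$-diagonal.

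I expect the only slightly subtle step to be the symmetric use of the development property at both $x$ and $y$: a naive attempt with regularity alone only prevents $y$ from lying in $\overline{\st(x,\sU_n)}$, which is not sufficient for rank $2$. The gain is that by also shrinking the star at $y$ into a neighborhood disjoint from $\overline{W}$, any $V\in\sU'_n$ containing $y$ is forced to avoid $W$ entirely, which is what converts the rank $1$ argument into a rank $2$ one. No further machinery beyond regularity, the development property, and the ability to take a common refinement of the covers should be required.
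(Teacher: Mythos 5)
Your proof is correct. The paper does not prove this lemma at all --- it is quoted from the cited source --- and your argument (pass to the nested common refinements $\sU'_n$, separate $x$ and $y$ by regularity, shrink the stars at both points, and observe that any $V\in\sU'_n$ containing $y$ then lies in $\st(y,\sU'_n)\subseteq X\setminus\overline{W}$ and so misses $\st(x,\sU'_n)\subseteq W$) is essentially the standard proof behind the citation, so there is nothing to correct. One small remark: for rank $2$ the Hausdorff property already suffices (take disjoint open sets about $x$ and $y$ in place of $W$ and $X\setminus\overline{W}$); regularity only becomes essential for stronger conclusions such as a strong rank $2$- or rank $3$-diagonal.
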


\begin{lemma}[\cite{Gru84}]\label{L1}
Not every Moore space $X$ has a regular $G_\delta$-diagonal.
\end{lemma}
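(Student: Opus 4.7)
By Lemma~\ref{ZG}, a space $X$ has a regular $G_\delta$-diagonal if and only if there is a sequence $(\sU_n:n<\omega)$ of open covers of $X$ with the strong separation property: for every pair of distinct points $x,y\in X$ some $\sU_n$ contains members $U\ni x$ and $V\ni y$ such that no element of $\sU_n$ meets both $U$ and $V$. The plan is therefore to exhibit a regular developable space that does not admit any such sequence, following the construction in \cite{Gru84}.

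The core idea is to build a Moore space $X$ together with a distinguished pair $p,q$ of points (or a richer family of such pairs) whose local development is deliberately engineered to ``bridge'' them at every level. A typical recipe fixes an underlying ground set and adjoins countably many carefully chosen approach sequences pointing simultaneously at $p$ and $q$; the generated topology is then verified directly to be regular and to admit a development, which are the defining requirements of a Moore space. The key engineered feature is that every basic neighborhood of $p$ and every basic neighborhood of $q$ both contain a common tail of some approach sequence, so any open cover refining the development is forced to contain a single member touching both.

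Given such a space, one argues by contradiction: assume $(\sU_n:n<\omega)$ witnesses a regular $G_\delta$-diagonal at the pair $p,q$. For each $n$, refine $\sU_n$ using the prescribed development and locate a member of $\sU_n$ meeting any preselected neighborhoods of $p$ and $q$; the bridging property guarantees such members exist. This forces $q\in\overline{\st(p,\sU_n)}$ for every $n<\omega$, hence $q\in\bigcap_{n<\omega}\overline{\st(p,\sU_n)}$, which contradicts Corollary~\ref{C1}(b). The delicate step, and the main obstacle, is calibrating the approach sequences so that the bridging survives arbitrary refinement and regrouping of open covers while still permitting a regular developable topology; this combinatorial balance is the heart of the construction in \cite{Gru84}, and I would carry it out in two stages, first exhibiting the Moore space with its explicit development and then arguing that every alleged separating sequence of covers can be tightened enough to inherit the bridging.
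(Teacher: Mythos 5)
The paper offers no proof of this lemma at all --- it is quoted from Gruenhage's Handbook chapter \cite{Gru84} --- so what is required is a concrete Moore space together with a verification that it admits no regular $G_\delta$-diagonal. Your proposal supplies neither: the entire mathematical content (``calibrating the approach sequences so that the bridging survives arbitrary refinement and regrouping of open covers'') is exactly the part you defer, and no ground set, no development, and no regularity check are actually produced. Worse, the architecture you describe cannot work. Your ``key engineered feature'' --- that every basic neighborhood of $p$ and every basic neighborhood of $q$ contain a common tail of some approach sequence --- says precisely that every neighborhood of $p$ meets every neighborhood of $q$, so such a space would fail to be Hausdorff and hence could not be a Moore space. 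More generally, no \emph{fixed} pair $p\neq q$ can do the job in a regular $T_1$ space: choose open $U\ni p$ and $V\ni q$ with $\overline{U}\cap\overline{V}=\emptyset$ and consider the single open cover $\sU=\{U,V,X\setminus\{p,q\}\}$; since the third member does not contain $p$, $\st(p,\sU)=U$ and $q\notin\overline{\st(p,\sU)}$. Thus the witnessing ``bad pair'' must depend on the candidate sequence of covers, and a contradiction argument pinned to one distinguished pair, as in your sketch, is structurally doomed rather than merely incomplete.

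The standard argument behind the citation is quite different. Take Mr\'owka's space $\Psi(\sA)=\omega\cup\sA$ for an infinite maximal almost disjoint family $\sA$ of subsets of $\omega$: it is a separable, locally compact Moore space, and maximality of $\sA$ makes it pseudocompact. By the metrization theorem of McArthur (extending Zenor's result for countably compact spaces in \cite{Zenor72}), a pseudocompact space with a regular $G_\delta$-diagonal is metrizable; but $\Psi(\sA)$ is not metrizable, since it is separable while $\sA$ is an uncountable relatively discrete subspace. Hence $\Psi(\sA)$ is a Moore space with no regular $G_\delta$-diagonal. If you want to repair your write-up, either reproduce this example (and quote or prove the pseudocompactness metrization theorem), or give some other explicit Moore space with a verified obstruction; a plan that postpones the construction and, as stated, contradicts regularity does not establish the lemma.
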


\begin{question}[A. Bella (see \cite{BBR14}, \cite{ArhBuz06})]\label{QB}
Is every regular $G_\delta$-diagonal a rank 2-diagonal?
\end{question}

As it is noted in \cite{BBR14}, there is no example yet even of a space $X$ with a regular 
$G_\delta$-diagonal that does not have a strong rank 2-diagonal.

\begin{conjecture}[\cite{ArhBuz06}]\label{CAB}
For every natural number $n$ there is a Tychonoff space $X_n$ with a
rank $n$-diagonal that is not a rank $n + 1$-diagonal.
\end{conjecture}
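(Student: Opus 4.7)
The plan is to construct, for each natural number $n \geq 1$, a Tychonoff space $X_n$ with a rank $n$-diagonal but not a rank $(n+1)$-diagonal. Note first that having a rank $(n+1)$-diagonal is a strictly stronger requirement than having a rank $n$-diagonal, since $\st^n(x,\sU) \subset \st^{n+1}(x,\sU)$, so $y \notin \st^{n+1}(x,\sU_m)$ implies $y \notin \st^n(x,\sU_m)$. The known benchmarks — every Moore space has a rank 2-diagonal (Lemma \ref{L2}), the example of a rank 3-diagonal Moore space of Example \ref{EAB}, and the fact that submetrizable spaces have infinite rank — suggest that the conjecture should be approachable by a single uniform, parametric construction indexed by $n$, rather than $n$ separate ad hoc examples.

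The approach I would take is a tree-indexed construction. Fix $n$ and let $T_n$ be a sufficiently branching tree of height $n+2$ (for instance a subtree of $\kappa^{\le n+2}$ for a large cardinal $\kappa$). Let $X_n$ be the set of maximal branches, with topology generated by ``cones'' above nodes at each level. A natural sequence of open covers $\sU_m$ arises by partitioning branches according to their restriction to level $m$; the geometry of stars is then controlled by \emph{tree distance}. The aim is to arrange basic neighborhoods so that two distinct branches $x,y$ whose tree distance is any finite value can be separated by an $n$-star in some $\sU_m$, while, crucially, for any countable sequence of open covers one can exhibit branches that remain $(n+1)$-star-close throughout. Tychonoff-ness can be ensured, as in many such constructions, by passing to a suitable metric refinement of the tree topology on the ``twigs.''

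First I would verify the positive half: that the naturally defined covers $(\sU_m)$ indeed witness a rank $n$-diagonal. This should be a direct combinatorial verification using the levels of $T_n$. Then I would try to produce an independent sequence of covers witnessing submetrizability-like separation for the particular structure used, taking care that it fails precisely at level $n+1$.

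The main obstacle, and what makes this a standing open problem, is the \emph{negative} half: showing that \emph{no} sequence $(\sV_m : m < \omega)$ of open covers achieves rank $n+1$-separation. This requires a pigeonhole argument over arbitrary countable families of open covers, which must exploit some intrinsic asymmetry of the construction between length-$n$ and length-$(n+1)$ chains of overlapping open sets. Candidate tools are $\Delta$-system lemmas applied to refinements of the $\sV_m$, oscillation arguments on the tree, or an Erd\H{o}s--Rado style coloring capturing ``$(n+1)$-fold chains.'' I expect that merely imitating the classical rank 2 versus rank 3 obstruction does not scale, and that a genuinely new counting device, perhaps measuring the \emph{length of potential star-chains} rather than star membership directly, is what the conjecture is really asking for.
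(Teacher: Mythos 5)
This statement is Conjecture~\ref{CAB}: the paper merely records it (with attribution to \cite{ArhBuz06}) as an open problem and offers no proof, so there is no argument of the paper to compare yours against. What you have written is a research programme, not a proof, and by your own admission the decisive half is missing: you never establish that \emph{no} sequence $(\sV_m:m<\omega)$ of open covers of your space can witness a rank $(n+1)$-diagonal, you only list candidate tools ($\Delta$-systems, oscillations, Erd\H{o}s--Rado colorings) that might someday do so. Since the entire content of the conjecture is this strictness (your opening observation that a rank $(n+1)$-diagonal implies a rank $n$-diagonal is immediate and not at issue), leaving the negative half open means nothing has been proved.

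The positive half is also only declared, not carried out. You do not specify the tree, the cone neighborhoods, or the ``metric refinement on the twigs,'' nor do you verify Tychonoff-ness or check that the level covers actually give $n$-star separation for \emph{all} pairs of distinct branches. Worse, the blueprint itself is suspect: branch spaces of trees with the cone topology are typically non-archimedean (the basic cones at a fixed level are pairwise disjoint), so stars do not grow under iteration and such spaces tend to have diagonals of infinite rank --- indeed they are usually submetrizable, and by the lemma quoted in the paper submetrizable spaces have diagonals of every finite rank. So the asymmetry between $n$-fold and $(n+1)$-fold chains of overlapping open sets, which your construction is supposed to encode, is not visibly present in a tree/branch space; any workable example would need overlapping (non-disjoint) basic sets arranged with a calibrated amount of chaining, which is precisely the part you leave unspecified. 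As the paper notes, even the rank~2 versus rank~3 and ``regular $G_\delta$ versus strong rank~2'' distinctions (Question~\ref{QB}) are unresolved, so a genuinely new idea is required; your sketch does not yet contain one.
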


In 1991 Hodel established the following result.

\begin{theorem}[\cite{Hodel91}]\label{Hodel}
If $X$ is a Hausdorff space then $|X|\leq 2^{c(X)H\psi(X)}$.
\end{theorem}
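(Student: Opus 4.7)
The plan is a transfinite closing-off argument, with $\kappa := c(X)\cdot H\psi(X)$ and target cardinality $2^\kappa$. For each $x\in X$ I fix a family $\{V(\eta,x):\eta<\kappa\}$ of open neighborhoods of $x$ witnessing $H\psi(X)\le\kappa$, and after replacing each neighborhood with an appropriate finite intersection I may assume the family is closed under finite intersections. I also need the standard cellularity-reduction lemma: if $c(X)\le \kappa$ and $\sV$ is any family of open sets in $X$, then there is $\sV'\subseteq \sV$ with $|\sV'|\le\kappa$ and $\bigcup\sV \subseteq \overline{\bigcup\sV'}$; this follows by choosing a maximal pairwise-disjoint refinement (of size $\le c(X)\le\kappa$) and picking one member of $\sV$ containing each piece.

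Next I would build an increasing chain $\{H_\alpha:\alpha<\kappa^+\}$ of subsets of $X$, each of cardinality at most $2^\kappa$: start from a singleton, take unions at limits, and at each successor stage impose the following closure. For every triple $(A,f,\xi)$ with $A\subseteq H_\alpha$, $|A|\le\kappa$, $f:A\to\kappa$, and $\xi<\kappa$, if there exists $y\in X$ with $V(\xi,y)\cap\bigcup_{x\in A}V(f(x),x)=\emptyset$, put one such $y$ into $H_{\alpha+1}$. The number of triples is bounded by $(2^\kappa)^\kappa\cdot \kappa^\kappa\cdot \kappa = 2^\kappa$, so $|H_{\alpha+1}|\le 2^\kappa$ by induction, and hence $H:=\bigcup_{\alpha<\kappa^+}H_\alpha$ satisfies $|H|\le 2^\kappa$.

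To conclude $H=X$, I would suppose for contradiction that $y\in X\setminus H$. For each $x\in H$ choose $\eta(x),\xi(x)<\kappa$ with $V(\eta(x),x)\cap V(\xi(x),y)=\emptyset$, and partition $H=\bigcup_{\xi<\kappa}H^\xi$ by the value of $\xi(x)$. For each $\xi<\kappa$, apply cellularity-reduction to $\{V(\eta(x),x):x\in H^\xi\}$ to obtain $A_\xi\subseteq H^\xi$ with $|A_\xi|\le\kappa$ and $H^\xi\subseteq \overline{\bigcup_{x\in A_\xi}V(\eta(x),x)}$. Since $V(\xi,y)$ is disjoint from every $V(\eta(x),x)$ with $x\in A_\xi$, the closure step for $(A_\xi,\eta,\xi)$ fires at the first stage $\alpha_\xi<\kappa^+$ containing $A_\xi$, producing $z_\xi\in H$ with $V(\xi,z_\xi)\cap\bigcup_{x\in A_\xi}V(\eta(x),x)=\emptyset$; in particular $z_\xi\notin\overline{\bigcup_{x\in A_\xi}V(\eta(x),x)}\supseteq H^\xi$, forcing $\xi(z_\xi)\neq\xi$.

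The hard part is converting the regressive array $(z_\xi)_{\xi<\kappa}\subseteq H$ with $\xi(z_\xi)\ne\xi$ into an outright contradiction. A direct aggregation over all $\xi$ into a single instance of the closure condition does not go through, because the neighborhoods $V(\xi,y)$ for different $\xi$ need not share a common open refinement disjoint from $\bigcup_\xi\bigcup_{x\in A_\xi}V(\eta(x),x)$ — intersections of $\kappa$ open sets are not open in general, so we cannot manufacture a single open neighborhood of $y$ witnessing $y\notin\overline{\bigcup_{x\in A}V(\eta(x),x)}$ for $A=\bigcup_\xi A_\xi$. I expect the finishing move to require either a strengthened closure condition that simultaneously records Hausdorff-separation data from both sides (enlarging the parameter space to pairs $(A,f,B,g)$ still bounded by $2^\kappa$), or a secondary closing-off that stabilises the map $\xi\mapsto\xi(z_\xi)$ via a pressing-down/Fodor-type argument exploiting the regularity of $\kappa^+$; in either case the extra parameters remain bounded by $\kappa$, so $|H|\le 2^\kappa$ is preserved, yielding $|X|\le 2^{c(X)H\psi(X)}$.
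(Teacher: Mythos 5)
Your write-up is not a proof: it stops exactly where the real difficulty lies, and your own last paragraph concedes this. Everything up to the production of the points $z_\xi\in H$ with $\xi(z_\xi)\neq\xi$ is sound, but that conclusion carries no contradiction whatsoever --- for each $\xi$ it merely says some point of $H$ is separated in colour $\xi$ from the chosen dense subfamily, which is perfectly consistent. Neither of your two suggested finishes is workable as stated: a Fodor/pressing-down argument needs a regressive function on a stationary subset of a regular uncountable cardinal, and the map $\xi\mapsto\xi(z_\xi)$ from $\kappa$ to $\kappa$ has no such structure; and enlarging the closure parameters to pairs $(A,f,B,g)$ does not remove the obstruction you yourself identified, namely that cellularity only gives density (not covering) of the reduced subfamily, so no single open set $V(\xi,y)$ around the outside point can ever be played against all of $H$ at once. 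This is precisely why the naive closing-off scheme that proves $|X|\le 2^{\chi(X)L(X)}$ does not transfer to the cellularity bound, and why the missing step is the actual content of the theorem.

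The missing idea is partition calculus. In Hodel's paper the inequality is deduced from a general combinatorial theorem whose engine is the Erd\H{o}s--Rado theorem, exactly as in the Hajnal--Juh\'asz proof of $|X|\le 2^{c(X)\chi(X)}$; the present paper only cites the result, so that is the proof to compare with. Concretely, with $\kappa=c(X)\cdot H\psi(X)$, well-order $X$ and colour each pair $x<y$ by an index pair $(\eta,\xi)\in\kappa\times\kappa$ with $V(\eta,x)\cap V(\xi,y)=\emptyset$. If $|X|>2^\kappa$, then by Erd\H{o}s--Rado, $(2^\kappa)^+\to(\kappa^+)^2_\kappa$, there is a homogeneous set $Y$ of size $\kappa^+$ with a fixed colour $(\eta,\xi)$; the sets $W_x=V(\eta,x)\cap V(\xi,x)$ for $x\in Y$ are nonempty (each contains $x$) and pairwise disjoint, since $W_x\cap W_y\subseteq V(\eta,x)\cap V(\xi,y)=\emptyset$ for $x<y$ in $Y$. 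This is a cellular family of size $\kappa^+$, contradicting $c(X)\le\kappa$, so $|X|\le 2^\kappa$. Your closing-off machinery is unnecessary once this step is available, and without it the argument does not close.
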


A. Bella proved in \cite{Bella87} the following theorem:

\begin{theorem}[\cite{Bella87}]\label{TB}
The cardinality of a ccc-space with a rank 2-diagonal does not exceed $2^\omega$. 
\end{theorem}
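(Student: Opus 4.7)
The plan is to deduce Theorem \ref{TB} from Hodel's inequality (Theorem \ref{Hodel}) by showing that a rank $2$-diagonal forces the Hausdorff pseudocharacter to be countable. Once $H\psi(X)=\omega$ is established, the ccc hypothesis $c(X)=\omega$ gives
$$|X|\le 2^{c(X)\cdot H\psi(X)} = 2^{\omega\cdot\omega}=2^\omega,$$
which is exactly what we want.

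The main step, then, is the following observation: if $X$ has a rank $2$-diagonal witnessed by open covers $(\sU_m:m<\omega)$, then for each point $x\in X$ the countable collection
$$\{\,V(m,x):=\st(x,\sU_m)\ :\ m<\omega\,\}$$
witnesses $H\psi(X)\le \omega$. To see this, suppose $x\ne y$ in $X$. By the definition of rank $2$-diagonal, choose $m<\omega$ with $y\notin \st^2(x,\sU_m)$. Then every $U\in\sU_m$ containing $y$ must be disjoint from $\st(x,\sU_m)$ (otherwise $U\subset \st^2(x,\sU_m)$ would contain $y$). Taking the union over all such $U$ yields $\st(y,\sU_m)\cap \st(x,\sU_m)=\emptyset$, i.e. $V(m,y)\cap V(m,x)=\emptyset$. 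This is exactly the separation required by the definition of $H\psi$, so $H\psi(X)\le \omega$.

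Putting the pieces together, the proof will read: choose covers $(\sU_m)$ witnessing the rank $2$-diagonal, define $V(m,x)=\st(x,\sU_m)$ and verify the disjointness above to conclude $H\psi(X)\le\omega$; then apply Theorem \ref{Hodel} with $c(X)=\omega$ to obtain $|X|\le 2^\omega$. I do not foresee a serious obstacle: the only thing to be careful about is the combinatorial unpacking of $\st^2$, which is the standard reason a rank-$2$ condition yields genuine neighborhood separation (as opposed to a rank-$1$ condition, which only gives a $G_\delta$-diagonal and hence merely $\psi(X)=\omega$, insufficient for Hodel's inequality).
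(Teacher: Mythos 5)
Your proposal is correct, and it is worth noting that the paper itself offers no proof of Theorem \ref{TB} at all: the result is simply quoted from Bella's paper \cite{Bella87}, so your argument supplies a self-contained derivation from material already stated in the paper. The key step checks out: if $(\sU_m:m<\omega)$ witnesses the rank $2$-diagonal and $y\notin\st^2(x,\sU_m)$, then any $U\in\sU_m$ with $y\in U$ meeting $\st(x,\sU_m)$ would satisfy $U\subset\st^2(x,\sU_m)$ and hence capture $y$, so indeed $\st(x,\sU_m)\cap\st(y,\sU_m)=\emptyset$; taking $V(m,x)=\st(x,\sU_m)$ gives $H\psi(X)\le\omega$, and Theorem \ref{Hodel} with $c(X)=\omega$ yields $|X|\le 2^\omega$. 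The only point you leave implicit is that Theorem \ref{Hodel} is stated for Hausdorff spaces, but the very disjointness of the stars you establish shows $X$ is Hausdorff, so nothing is missing. Your route also dovetails with the paper's own remark that Hodel's inequality would imply Buzyakova's theorem if ccc spaces with regular $G_\delta$-diagonals condensed onto first-countable Hausdorff spaces: for rank $2$-diagonals no condensation is needed because the diagonal covers directly witness countable Hausdorff pseudocharacter. Whether Bella's original argument proceeds this way or by a direct closing-off argument, your derivation is shorter and uses only Theorem \ref{Hodel} as a black box, at the cost of invoking a cardinal inequality that is itself nontrivial.
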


Therefore if Question \ref{QB} has a positive answer then Buzyakova's theorem (Theorem \ref{B}) will 
follow from Bella's theorem (Theorem \ref{TB}).

In \cite{Buz06} Buzyakova asked the following question (in that relation see Example \ref{EAB}):

\begin{question}[\cite{Buz06}]
Is there a ccc-space with a regular $G_\delta$-diagonal that does not condense onto
a first-countable Hausdorff space?
\end{question}

If the answer of the above question is in the negative then Buzyakova's theorem will follow immediately 
from Hodel's inequality (Theorem \ref{Hodel}).

\section{Main results}\label{s4}

Theorem \ref{GB} below extends Buzyakova's result (Theorem \ref{B}) for uncountable cardinalities. Its  
proof follows closely the original proof of Buzyakova. We begin first with a generalization for higher
cardinalities of Lemma 2.1 from \cite{Buz06}.

\begin{lemma}\label{LBG}
Let $\kappa$ be an infinite cardinal, $X$ be a space with $c(X)=\kappa$ and $U\times V$ be a 
non-empty open set in $X^2$. Let also $\mathcal{U}$ be a collection of open boxes in $X^2$ such that 
$U\times V\subset \overline{\bigcup\sU}$. Then there exists $\sV=\{U_\eta\times V_\eta:\eta<\kappa\}$ 
such that $\sV\subset \sU$, $V\subset\overline{\bigcup_{\eta<\kappa} V_\eta}$ and 
$U_\eta\times V_\eta$ meets $U\times V$ for each $\eta<\kappa$. 
\end{lemma}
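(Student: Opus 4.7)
The plan is to mimic Buzyakova's original argument from \cite{Buz06}, replacing countability with bounded cardinality throughout. The strategy is to build $\sV$ by extracting a maximal disjoint family of second projections (restricted to $V$), using $c(X)=\kappa$ to bound its size, and then lifting back to boxes in $\sU$.

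First, restrict attention to $\sU' := \{U'\times V'\in\sU : (U'\times V')\cap (U\times V)\neq\emptyset\}$; this is nonempty because $U\times V$ is a nonempty open subset of $\overline{\bigcup\sU}$. Each box in $\sU'$ automatically satisfies $U'\cap U\neq\emptyset$ and $V'\cap V\neq\emptyset$. Next, let $\sB$ denote the family of all nonempty open sets $W\subset X$ such that $W\subset V'\cap V$ for some $U'\times V'\in\sU'$. Using Zorn's Lemma, pick a pairwise disjoint subfamily $\sW\subset\sB$ maximal with respect to inclusion. Then $\sW$ is a cellular family in $X$, so $c(X)=\kappa$ gives $|\sW|\leq\kappa$. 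For each $W\in\sW$, fix a witness $U_W\times V_W\in\sU'$ with $W\subset V_W\cap V$.

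The crux is to verify $V\subset \overline{\bigcup_{W\in\sW} V_W}$; since $W\subset V_W$, it suffices to show $V\subset \overline{\bigcup\sW}$. Suppose for contradiction that $V^* := V\setminus\overline{\bigcup\sW}$ is nonempty. Fix any $x\in U$ and $y\in V^*$; then $(x,y)\in \overline{\bigcup\sU}$, so the open neighborhood $U\times V^*$ meets some box $\tilde U\times\tilde V\in\sU$. This box automatically belongs to $\sU'$, and the open set $W^* := \tilde V\cap V^*$ is nonempty, contained in $\tilde V\cap V$ (hence $W^*\in\sB$), and disjoint from every $W\in\sW$ (because $V^*\cap\overline{\bigcup\sW}=\emptyset$). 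Thus $\sW\cup\{W^*\}$ is a strictly larger pairwise disjoint subfamily of $\sB$, contradicting maximality of $\sW$.

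To finish, enumerate $\sW = \{W_\eta : \eta<\lambda\}$ for some $\lambda\leq\kappa$ and set $U_\eta\times V_\eta := U_{W_\eta}\times V_{W_\eta}$; if $\lambda<\kappa$, pad by repeating any fixed box in $\sU'$ to fill indices up to $\kappa$. The resulting $\sV$ satisfies all three requirements. The only delicate point is the density step: one must intersect the newly found box's second projection with $V^*$ rather than with $V$ in order to obtain a set that is simultaneously a legitimate element of $\sB$ and disjoint from $\bigcup\sW$. Once that trick is in place, the remainder is a routine transcription of the countable ccc case.
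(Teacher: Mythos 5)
Your proof is correct and takes essentially the same route as the paper's: restrict to the subfamily $\sU'$ of boxes meeting $U\times V$ and use $c(X)=\kappa$ to extract at most $\kappa$ of their second projections whose union is dense in $V$, then lift back to boxes. Your maximal-disjoint-family (Zorn) argument is just the standard proof of the cellularity-extraction step that the paper invokes in one line (``Since $c(X)=\kappa$ and $V$ is open\dots''), so the two proofs coincide in substance, with yours spelling out the details the paper leaves implicit.
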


\begin{proof}
Let $\sU'$ consist of all elements of $\sU$ that meet $U\times V$. Since 
$U\times V\subset \overline{\bigcup\sU}$ and $U$ is not empty, we have 
$V\subset\overline{\bigcup\{V_\xi:U_\xi\times V_\xi\in\sU'\}}$. Since $c(X)=\kappa$ and $V$ is open in 
$X$, there exists $\sV=\{U_\eta\times V_\eta:\eta<\kappa\}$ such that $\sV\subset \sU'$, 
$V\subset\overline{\bigcup_{\eta<\kappa} V_\eta}$ and $U_\eta\times V_\eta$ meets $U\times V$ for 
each $\eta<\kappa$. 
\end{proof}

\begin{theorem}\label{GB}
If $X$ is a Urysohn space then $|X|\leq  2^{c(X)\cdot\overline\Delta(X)}.$
\end{theorem}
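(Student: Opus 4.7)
The plan is to adapt Buzyakova's proof of Theorem~\ref{B} to cardinals above $\omega$, with the countable open covers replaced by the family $(\sU_\eta : \eta < \overline{\Delta}(X))$ from Corollary~\ref{C1} and Lemma~\ref{LBG} playing the role that the ccc hypothesis played in the original argument. Set $\tau := c(X) \cdot \overline{\Delta}(X)$, and fix such a family satisfying both: (a) for every pair of distinct points $x, y \in X$ there exist $\eta$ and $U_\eta(x,y), U_\eta(y,x) \in \sU_\eta$ with $x \in U_\eta(x,y)$, $y \in U_\eta(y,x)$, and $U_\eta(y,x) \cap \overline{\st(U_\eta(x,y), \sU_\eta)} = \emptyset$; and (b) $\{x\} = \bigcap_\eta \overline{\st(x, \sU_\eta)}$ for every $x \in X$.

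The main construction is a transfinite closing-off of length $\tau^+$. I would build an increasing chain $(H_\alpha)_{\alpha < \tau^+}$ of subsets of $X$ with $|H_\alpha| \leq 2^\tau$, together with auxiliary subfamilies $\sV_{\alpha, \eta} \subseteq \sU_\eta$, so that at each successor stage $\alpha + 1$ the following hold: for every pair of distinct points $p, q \in H_\alpha$ and every $\eta$, a separating pair witnessing~(a) is adjoined to $\sV_{\alpha+1, \eta}$; and, for each $\eta$ and each non-empty open box in $X^2$ formed from sets in $\bigcup_{\beta \leq \alpha} \sV_{\beta, \eta}$, Lemma~\ref{LBG} is applied to extract a subfamily of $\sU_\eta$ of cardinality at most $c(X)$ whose second projections are dense in the second factor of the box, and a representative point from each such member is adjoined to $H_{\alpha+1}$. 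The cardinal arithmetic then yields $|H_{\alpha+1}| \leq |H_\alpha|^2 \cdot \overline{\Delta}(X) \cdot c(X) \leq 2^\tau$, so that $H := \bigcup_{\alpha < \tau^+} H_\alpha$ satisfies $|H| \leq \tau^+ \cdot 2^\tau = 2^\tau$.

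To finish, I would prove $H = X$. Suppose for contradiction $x \in X \setminus H$; combining property~(b), which forces $\{x\} = \bigcap_\eta \overline{\st(x, \sU_\eta)}$, with the density of $H$ inside each $\st(H, \sU_\eta)$ built in via Lemma~\ref{LBG}, together with the separation property~(a) applied to $x$ and appropriate points of $H_\alpha$, should contradict $x \notin H_{\alpha+1}$ at some stage $\alpha < \tau^+$, since $\tau^+$ is regular and exceeds $\overline{\Delta}(X)$. Hence $|X| = |H| \leq 2^{c(X) \cdot \overline{\Delta}(X)}$, as required.

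The hard part will be specifying the closure conditions on the chain $(H_\alpha)$ precisely, so that they are strong enough to absorb every $x \in X$ into $H$ while remaining weak enough to preserve $|H_\alpha| \leq 2^\tau$. This is a delicate balance between the $c(X)$-sized dense approximations supplied by Lemma~\ref{LBG} and the $\overline{\Delta}(X)$-sized separating structure supplied by Corollary~\ref{C1}; the argument that $H$ exhausts $X$ in particular requires the two kinds of data to interlock at a common stage, which is exactly where the definition of $\tau$ as the product $c(X) \cdot \overline{\Delta}(X)$ enters.
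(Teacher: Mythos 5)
There is a genuine gap, and it sits exactly where you acknowledge the ``hard part'' lies: the exhaustion step $H=X$ is never proved, and nothing in your closure conditions can force it. Points enter $H$ only as arbitrarily chosen representatives of boxes produced by Lemma \ref{LBG}, so there is no mechanism by which a given $x\in X\setminus H$ must be ``caught'' at some stage $\alpha<\tau^+$. Properties (a) and (b) of Corollary \ref{C1} let you \emph{separate} $x$ from points of $H$, which pushes in the wrong direction for an exhaustion argument; what one would need is a covering-type property of $H$ (as in Arhangel'skii-style closing-off proofs using $L$ or $aL$ together with character), and cellularity supplies no such property. This is not an accident: the known proofs of cellularity-based bounds (Hajnal--Juh\'asz $|X|\le 2^{\chi(X)c(X)}$, Buzyakova's Theorem \ref{B}) are not naive point-closing-off arguments. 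The paper's proof of Theorem \ref{GB}, following Buzyakova, is instead a direct encoding/counting argument: one fixes $\{W_\xi:\xi<\tau\}$ witnessing the regular $G_\tau$-diagonal, for each $\xi$ a family $\sU_\xi$ of open boxes in $X^2\setminus\overline{W_\xi}$ with $|\sU_\xi|\le 2^{\kappa}$ whose union is dense in $X^2\setminus\overline{W_\xi}$, and then builds a single family $\sF$ of open sets of size at most $2^{\kappa\cdot\tau}$ (each member determined by a $\le\kappa$-sized subfamily of some $\sU_\xi$) such that \emph{every} singleton $\{x\}$ is an intersection of $\tau$ many members of $\sF$; the bound $|X|\le|\sF|^{\tau}\le 2^{\kappa\cdot\tau}$ then falls out without any transfinite recursion.

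A second, more local, problem is that your use of Lemma \ref{LBG} is not well-posed as stated. The lemma needs an ambient collection $\sU$ of open boxes in $X^2$ with $U\times V\subset\overline{\bigcup\sU}$ from which to extract the $\kappa$-sized subfamily; in the paper this ambient collection is $\sU_\xi$, a dense family of boxes in $X^2\setminus\overline{W_\xi}$ of cardinality at most $2^{\kappa}$, whose existence rests on Kurepa's theorem $c(X^2)\le 2^{c(X)}$ --- an ingredient your sketch never supplies. Your $\sU_\eta$ from Corollary \ref{C1} are open covers of $X$, not box families in $X^2$, and you never specify relative to which dense family of boxes (avoiding which closed set) the extraction is to be performed, nor why the boxes ``formed from sets in $\bigcup_{\beta\le\alpha}\sV_{\beta,\eta}$'' lie in the closure of a union of suitable boxes. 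So even before the missing exhaustion argument, the recursion itself is not yet a construction one could carry out. If you want to salvage the idea, the honest route is to abandon the chain of points and instead, as in the paper, encode each point by the $\tau$-indexed family of co-dense sets $F_\xi=X\setminus\overline{\bigcup\{G_{U\times V}:U\times V\in\sO\}}$ built from Lemma \ref{LBG}.
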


\begin{proof}
Let $c(X)=\kappa$ and $\overline\Delta(X)=\tau$. Then $X$ is a space with a regular 
$G_\tau$-diagonal. Let $\{W_\xi:\xi<\tau\}$ be a family of open sets in $X^2$ such that 
$\Delta_X=\bigcap\{W_\xi:\xi<\tau\}=\bigcap\{\overline{W_\xi}:\xi<\tau\}$. For each $\xi<\tau$, fix a 
collection $\sU_\xi$ of open boxes in $X^2\setminus\overline{W_\xi}$ such that $|\sU_\xi|\le 2^\kappa$ 
and $X^2\setminus\overline{W_\xi}\subset\overline{\bigcup\sU_\xi}$. Such a collection exists because 
$c(X^2)\le 2^\kappa$ \cite{Kurepa62}.

Now let $\sF$ be a family of open sets in $X$ such that $F\in \sF$ if and only if 
$F=X\setminus\overline{\bigcup\{V:U\times V\in \sV\text{ for some }U\}}$, where $\sV$ is such that 
$|\sV|\le\kappa$ and $\sV\subset\sU_\xi$ for some $\xi<\kappa$. 

Since $|\sU_\xi|\le 2^\kappa$ for each $\xi<\tau$ and each $F\in \sF$ is determined by a subset $\sV$ 
of some $\sU_\xi$ with cardinality at most $\kappa$, we have $|\sF|\le \tau\cdot 2^\kappa$. To finish 
the proof that $|X|\le 2^{\kappa\cdot\tau}$ it is sufficient to show that for every $x\in X$ there exists a 
family $\{F_\xi:\xi<\tau\}\subset\sF$ such that $\{x\}=\bigcap\{F_\xi:\xi<\tau\}$.

Let $x\in X$ be fixed. For each $\xi<\tau$ we shall define $F_\xi$. Let $B$ be an open set in $X$ 
such that $x\in B$ and $B\times B\subset W_\xi$. Fix a collection $\sO$ of open boxes in 
$X^2\setminus\overline{W_\xi}$ with $|\sO|\le \kappa$ and such that:

(a) $x\in U\subset B$ for every $U\times V\in\sO$; and

(b) $\overline{\bigcup\{V:U\times V\in \sO\text{ for some }U\}}$ contains 
$\{y:(x,y)\notin \overline{W_\xi}\}$.

In order to construct such a collection we first cover $(\{x\}\times X)\setminus\overline{W_\xi}$ by 
open boxes in $X^2\setminus\overline{W_\xi}$ satisfying (a) and then using the fact that 
$c(X)\le\kappa$ we find the desired subcollection $\sO$.

For each $U\times V\in \sO$, fix $\sV_{U\times V}=\{U_\eta\times V_\eta:\eta<\kappa\}$ such that 
$\sV_{U\times V}\subset \sU_\xi$ satisfies the conclusion of Lemma \ref{LBG} and let 
$G_{U\times V}=\bigcup\{V_\eta:\eta<\kappa\}$. 

Now we shall verify the following two observations:

(1) $V_\eta\cap B=\emptyset$ for each $\eta<\kappa$. Indeed, by (a) $U\subset B$ and by the 
conclusion of Lemma \ref{LBG}, $U\cap U_\eta\ne\emptyset$. Hence $U_\eta\cap B\ne\emptyset$. 
Now suppose that $V_\eta\cap B\ne\emptyset$. Then 
$(U_\eta\times V_\eta)\cap (B\times B)\ne\emptyset$. However $B\times B\subset W_\xi$ while 
$U_\eta\times V_\eta\in\sU_\xi$ is a subset of $X^2\setminus\overline{W_\xi}$ -- contradiction.

(2) $\overline{\bigcup\{G_{U\times V}:U\times V\in\sO\}}$ contains 
$\{y:(x,y)\notin \overline{W_\xi}\}$. 
Indeed, since all $U_\eta\times V_\eta$ satisfy the conclusion of Lemma \ref{LBG}, 
$\overline{G_{U\times V}}$ contains $V$. Now the claim follows from (b).

Let $F_\xi=X\setminus\overline{\bigcup\{G_{U\times V}:U\times V\in\sO\}}$. Then $F_\xi\in \sF$. 
It follows from (1) that $x\in F_\xi$ for each $\xi<\tau$. By (2), $F_\xi$ misses 
$\{y:(x,y)\notin \overline{W_\xi}\}$. By the choice of the sets $W_\xi$, we have 
$\{x\}=\bigcap_{\xi<\tau} F_\xi$.
\end{proof}

It follows directly from our next result that there is an upper bound even for the cardinality of weakly 
Lindel\"of spaces with regular $G_\delta$-diagonals.

\begin{theorem}\label{G4}
If $X$ is a Urysohn space then $|X|\leq  2^{\overline\Delta(X)\cdot 2^{wL(X)}}.$
\end{theorem}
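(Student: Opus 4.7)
The plan is, as in the proof of Theorem \ref{GB}, to construct a family $\sF$ of open subsets of $X$ and a point-separating map $X \to \sP(\sF)$; the target is $|\sF| \le \overline\Delta(X) \cdot 2^{wL(X)}$, which will give $|X| \le 2^{|\sF|} = 2^{\overline\Delta(X)\cdot 2^{wL(X)}}$.

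Write $\tau = \overline\Delta(X)$ and $\mu = wL(X)$. I would first invoke Corollary \ref{C1} to fix a family $(\sU_\eta : \eta < \tau)$ of open covers of $X$ with the star-separation property: any distinct $x, y \in X$ admit $U_\eta(x,y), U_\eta(y,x) \in \sU_\eta$ with $x \in U_\eta(x,y)$, $y \in U_\eta(y,x)$, and $U_\eta(y,x) \cap \overline{\st(U_\eta(x,y),\sU_\eta)} = \emptyset$. Since $wL(X)\le\mu$, I would then prune each $\sU_\eta$ to a subcollection $\sV_\eta \subseteq \sU_\eta$ with $|\sV_\eta|\le\mu$ and $\overline{\bigcup \sV_\eta} = X$. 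The natural family to build from this data is
\[ \sF = \bigl\{X \setminus \overline{\bigcup\sA} : \eta < \tau,\; \sA \subseteq \sV_\eta \bigr\}, \]
of cardinality at most $\tau \cdot 2^\mu$ (since each $\sV_\eta$ has at most $2^\mu$ subsets).

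The heart of the proof will be showing that $\varphi(x) = \{F \in \sF : x \in F\}$ separates points. Given distinct $x,y$, I would take $\eta$ supplied by Corollary \ref{C1}(a) and try $\sA = \{V \in \sV_\eta : V\cap U_\eta(x,y) = \emptyset\}$ with $F = X \setminus \overline{\bigcup\sA}$. Then $x \in F$ because $U_\eta(x,y)$ is an open neighborhood of $x$ disjoint from $\bigcup\sA$. To get $y \notin F$, I would argue that any $V \in \sV_\eta$ meeting $U_\eta(y,x)$ must be disjoint from $U_\eta(x,y)$ --- else $V \subseteq \st(U_\eta(x,y),\sU_\eta)$, contradicting the star-separation --- so every such $V$ lies in $\sA$; combined with the density of $\bigcup\sV_\eta$ this forces $U_\eta(y,x) \subseteq \overline{\bigcup\sA}$, hence $y \in \overline{\bigcup\sA}$.

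The main obstacle I anticipate is precisely this compatibility step: the set $\sA$ must simultaneously avoid an open neighborhood of $x$ and engulf $y$ in its closure, and it is the full star-separation of Corollary \ref{C1} --- stronger than the mere closed-pseudocharacter bound $\psi_c(X)\le\tau$ --- together with the density of the pruned covers $\sV_\eta$ that makes this possible. Once $\varphi$ is verified to be injective, the conclusion $|X| \le |\sP(\sF)| \le 2^{\tau\cdot 2^\mu}$ is immediate.
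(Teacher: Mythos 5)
Your proposal is correct and follows essentially the same route as the paper: Corollary \ref{C1} plus weak-Lindel\"of pruning of each cover, with points separated by sets of the form $X\setminus\overline{\bigcup\sA}$ for $\sA$ a subfamily of a pruned cover, yielding the bound $2^{\overline{\Delta}(X)\cdot 2^{wL(X)}}$. The only differences are cosmetic: you take $\sA$ to be the members disjoint from $U_\eta(x,y)$ and package the count as an injection into $\sP(\sF)$, whereas the paper uses the members meeting $U_\eta(y,x)$ and counts the resulting intersections equal to $\{x\}$; both give the same estimate.
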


\begin{proof}
Let $\overline\Delta(X)\le\kappa$. Then $X$ is a space with a regular $G_\kappa$-diagonal. Therefore, 
according to Corollary \ref{C1}, there is a family $(\sU_\eta:\eta<\kappa)$ of open covers of $X$ such 
that if $x$ and $y$ are distinct points of $X$, then there exist $\eta<\kappa$ and open sets 
$U_\eta(x,y),U_\eta(y,x)\in \sU_\eta$ containing $x$ and $y$ respectively, such that 
$U_\eta(y,x)\cap\overline{\st(U_\eta (x,y),\sU_\eta)}=\emptyset$. 
Therefore for each pair of distinct points $x,y\in X$, we can fix one such $\eta$, denoted by 
$\eta(x,y)$, and open sets $U_{\eta(x,y)}(x,y)$ and $U_{\eta(x,y)}(y,x)$ with the required properties. 

Let $wL(X)\le\tau$. Then for each $\eta < \kappa$ we can find a subfamily $\mathcal{D}_\eta$ of 
$\mathcal{U}_\eta$ such that $|\mathcal{D}_\eta|\le \tau$ and 
$X = \overline{\bigcup \mathcal{D}_\eta}$. 

For $x\in X$ and $\eta<\kappa$, let $Y_\eta(x)=\{y: y\in X\setminus\{x\}, \eta(x,y)=\eta\}$. 
For $y\in Y_\eta(x)$ let $\sD_\eta^x(y)=\{U:U\in\sD_\eta, U\cap U_\eta(y,x)\ne\emptyset\}$. 
Then $U_\eta(x,y)\cap\overline{\bigcup\sD_\eta^x(y)}=\emptyset$, hence 
$x\notin \overline{\bigcup\sD_\eta^x(y)}$.
Therefore $\{x\}=\bigcap\{\bigcap\{X\setminus\overline{\bigcup\sD_\eta^x(y)}:y\in Y_\eta(x)\}:\eta<\kappa\}$.

For each $x\in X$ and $\eta<\kappa$, there are at most $2^\tau$ sets of the form $\sD_\eta^x(y)$ for 
$|\sD_\eta|\le\tau$. Hence, there are at most $2^{2^\tau}$ sets of the form 
$\bigcap\{X\setminus \overline{\bigcup\sD_\eta^x(y)}:y\in Y_\eta(x)\}$. Thus, there are at most 
$2^{\kappa\cdot 2^\tau}$ many intersections of the form $\bigcap\{\bigcap\{X\setminus\overline{\bigcup\sD_\eta^x(y)}:y\in Y_\eta(x)\}:\eta<\kappa\}$. 
Therefore we conclude that $|X|\le 2^{\kappa\cdot 2^\tau}$.
\end{proof}

\begin{corollary}\label{CGWL0}
If $X$ is a Urysohn space with a regular $G_\delta$-diagonal then $|X|\le 2^{2^{wL(X)}}$.
\end{corollary}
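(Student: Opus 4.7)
The plan is to observe that this corollary follows immediately from Theorem \ref{G4} by cardinal arithmetic. First I would note that the hypothesis ``$X$ has a regular $G_\delta$-diagonal'' is exactly the statement that $\overline{\Delta}(X) = \omega$, by the definition of the regular diagonal degree.

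Next I would apply Theorem \ref{G4} directly to conclude
\[
|X| \le 2^{\overline{\Delta}(X)\cdot 2^{wL(X)}} = 2^{\omega \cdot 2^{wL(X)}}.
\]
Since $wL(X)$ is an infinite cardinal by definition, we have $2^{wL(X)} \ge 2^{\omega} > \omega$, and therefore $\omega \cdot 2^{wL(X)} = 2^{wL(X)}$. Substituting gives $|X| \le 2^{2^{wL(X)}}$, as desired.

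There is no real obstacle here; the corollary is purely a matter of specializing Theorem \ref{G4} to $\kappa=\omega$ and absorbing the factor of $\omega$ into $2^{wL(X)}$. The entire content is the cardinal-arithmetic simplification of the exponent.
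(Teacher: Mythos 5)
Your proof is correct and is exactly the intended derivation: specialize Theorem \ref{G4} to the case $\overline{\Delta}(X)=\omega$ and absorb the $\omega$ into $2^{wL(X)}$ by cardinal arithmetic. This matches the paper's (implicit) argument for the corollary.
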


\begin{corollary}\label{CGWL}
If $X$ is a weakly Lindel\"of, Urysohn space with a regular $G_\delta$-diagonal then $|X|\le 2^{2^\omega}$.
\end{corollary}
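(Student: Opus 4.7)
The plan is to recognize that this corollary is an immediate specialization of the preceding results: Theorem \ref{G4} (equivalently Corollary \ref{CGWL0}) already contains all the content, and what remains is simply to feed in the cardinal values corresponding to the hypotheses.

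First I would unwind the hypotheses into the cardinal invariants that appear in Theorem \ref{G4}. The definition of the regular diagonal degree given in Section \ref{s2} says that $\overline{\Delta}(X)$ is the least infinite cardinal $\kappa$ for which $X$ has a regular $G_\kappa$-diagonal. Thus ``$X$ has a regular $G_\delta$-diagonal'' is precisely $\overline{\Delta}(X)=\omega$. Similarly, the definition of $wL(X)$ gives that ``$X$ is weakly Lindel\"of'' is exactly $wL(X)=\omega$. (Both definitions require $X$ to be at least Urysohn for $\overline{\Delta}$ to be defined; this is given.)

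Next I would invoke Theorem \ref{G4}, which states $|X|\le 2^{\overline{\Delta}(X)\cdot 2^{wL(X)}}$ for every Urysohn space $X$. Substituting $\overline{\Delta}(X)=\omega$ and $wL(X)=\omega$ yields
\[
|X|\le 2^{\omega\cdot 2^{\omega}}=2^{2^{\omega}},
\]
since $\omega\le 2^{\omega}$ forces $\omega\cdot 2^{\omega}=2^{\omega}$. Equivalently, one can apply Corollary \ref{CGWL0}, which already reads $|X|\le 2^{2^{wL(X)}}$, and observe that $wL(X)=\omega$ gives the bound $2^{2^{\omega}}$ directly.

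There is no real obstacle here: all of the work has been carried out in the proof of Theorem \ref{G4}, where the combinatorics involving the covers $\sU_\eta$, the dense subcollections $\sD_\eta$ of size $\le\tau$, and the assignment $y\mapsto\sD_\eta^x(y)$ were used to bound the number of subsets $\bigcap\{X\setminus\overline{\bigcup\sD_\eta^x(y)}:y\in Y_\eta(x)\}$ of $X$ by $2^{\kappa\cdot 2^{\tau}}$. The only ``step'' in the corollary is the verification that the hypotheses translate to $\kappa=\tau=\omega$, which is a definitional matter.
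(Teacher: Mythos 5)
Your proposal is correct and matches the paper's (implicit) argument: the corollary is stated without proof precisely because it is the specialization of Theorem \ref{G4} (or Corollary \ref{CGWL0}) to $\overline{\Delta}(X)=\omega$ and $wL(X)=\omega$, which is exactly the substitution you carry out. The cardinal arithmetic $\omega\cdot 2^{\omega}=2^{\omega}$ is the only computation needed, and you handle it correctly.
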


With our next theorem we show how the cardinality of a Urysohn space $X$ depends on the 
cardinal functions $\chi(X)$, $wL(X)$ and $\overline{\Delta}(X)$.

\begin{theorem}\label{G3}
Let $X$ be a Urysohn space. Then $|X|\le wL(X)^{\chi(X)\cdot\overline{\Delta}(X)}$.
\end{theorem}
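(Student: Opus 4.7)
The plan is to produce an injection from $X$ into a set of cardinality $\tau^{\mu\kappa}$, where $\tau=wL(X)$, $\mu=\chi(X)$, and $\kappa=\overline\Delta(X)$. For the setup, invoke Corollary \ref{C1} to fix a family $(\sU_\eta:\eta<\kappa)$ of open covers of $X$ with the star-separation property: for every pair of distinct points $x,y\in X$ there exist $\eta<\kappa$ and $U_\eta(x,y),U_\eta(y,x)\in\sU_\eta$ containing $x$ and $y$ respectively such that no member of $\sU_\eta$ meets both. Fix a local base $\{B(x,\alpha):\alpha<\mu\}$ at each $x\in X$, and for each $\eta<\kappa$ apply $wL(X)\le\tau$ to the cover $\sU_\eta$ to obtain a subfamily $\sD_\eta\subset\sU_\eta$ of cardinality at most $\tau$ whose union is dense in $X$; well-order each $\sD_\eta$ in type at most $\tau$.

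For each $x\in X$, define $g_x:\mu\times\mu\times\kappa\to\bigcup_{\eta<\kappa}\sD_\eta$ by letting $g_x(\alpha,\beta,\eta)$ be the first $U\in\sD_\eta$ (in the fixed well-ordering) for which $U\cap B(x,\alpha)\cap B(x,\beta)\ne\emptyset$. Existence is immediate: $B(x,\alpha)\cap B(x,\beta)$ is a nonempty open set (it contains $x$) and $\bigcup\sD_\eta$ is dense. Since $g_x(\alpha,\beta,\eta)\in\sD_\eta$ for every triple and $|\mu\times\mu\times\kappa|=\mu\kappa$ (using $\mu\cdot\mu=\mu$ for infinite $\mu$), the range of $g_x$ lies in $\prod_{(\alpha,\beta,\eta)}\sD_\eta$, a set of cardinality at most $\tau^{\mu\kappa}$.

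The main step is the injectivity of $x\mapsto g_x$, and it is here that the use of \emph{pairs} of base indices is crucial. Given $x_1\ne x_2$, apply Corollary \ref{C1} to obtain $\eta<\kappa$ and opens $V_i:=U_\eta(x_i,x_{3-i})\in\sU_\eta$ (for $i=1,2$) with no member of $\sU_\eta$ meeting both $V_1$ and $V_2$; pick $\alpha_i<\mu$ with $B(x_i,\alpha_i)\subset V_i$. If $g_{x_1}(\alpha_1,\alpha_2,\eta)=g_{x_2}(\alpha_1,\alpha_2,\eta)=U$, then the defining condition for $g_{x_1}$ at the triple $(\alpha_1,\alpha_2,\eta)$ forces $U\cap B(x_1,\alpha_1)\ne\emptyset$ and hence $U\cap V_1\ne\emptyset$, while the defining condition for $g_{x_2}$ at the same triple forces $U\cap B(x_2,\alpha_2)\ne\emptyset$ and hence $U\cap V_2\ne\emptyset$. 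Since $U\in\sD_\eta\subset\sU_\eta$, this contradicts the star-separation property, so $g_{x_1}\ne g_{x_2}$. The conceptual obstacle the pair $(\alpha,\beta)$ is designed to overcome is that the local bases at distinct points of $X$ are not coordinated a priori and need not be nested, so no single index $\alpha$ need satisfy $B(x_1,\alpha)\subset V_1$ and $B(x_2,\alpha)\subset V_2$ simultaneously; splitting into two coordinates lets each one absorb the smallness requirement for one of the two points.
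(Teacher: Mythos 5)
Your proof is correct. The injectivity argument works: at the triple $(\alpha_1,\alpha_2,\eta)$ the common value $U\in\sD_\eta$ would meet $B(x_1,\alpha_1)\subset V_1$ (from the defining condition for $g_{x_1}$) and $B(x_2,\alpha_2)\subset V_2$ (from the defining condition for $g_{x_2}$), contradicting the separation property, and the codomain has size $\tau^{\mu\cdot\mu\cdot\kappa}=\tau^{\mu\kappa}$. One small citation point: the property you quote (``no member of $\sU_\eta$ meets both'') is literally the conclusion of Lemma \ref{ZG}; Corollary \ref{C1}(a) states the formally different condition $U_\eta(y,x)\cap\overline{\st(U_\eta(x,y),\sU_\eta)}=\emptyset$, which does imply what you use, so either reference suffices.

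Your route differs from the paper's in how the coding is organized, even though the ingredients (Zenor's characterization, weakly Lindel\"of dense subfamilies $\sD_\eta\subset\sU_\eta$, local bases of size $\chi(X)$) are the same. The paper identifies each point set-theoretically: for each $\eta$ and each base index $\xi$ it selects members of $\sD_\eta$ meeting $V_\xi(x)$, forms the sets $\sD_\eta(x,\xi)$, proves $\{x\}=\bigcap_{\eta}\bigcap_{\xi}\overline{\bigcup\sD_\eta(x,\xi)}$ (this step genuinely needs the closure version of the separation, i.e.\ $y\notin\overline{\st(U_\eta(x,y),\sU_\eta)}$, to keep $y$ out of the closure of the union), and then counts how many such intersections there can be. You instead build a single explicit injection $x\mapsto g_x$ into a product of the $\sD_\eta$'s via a fixed well-ordering, and your verification uses only the raw disjointness statement, with no closures at all; the pair of base indices $(\alpha,\beta)$ is your substitute for the paper's per-point families $\sD_\eta(x,\xi)$, absorbing the fact that the bases at $x_1$ and $x_2$ are uncoordinated. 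The trade-off: your argument is shorter and more elementary (a clean injection, simpler counting), while the paper's argument yields the slightly more informative structural fact that every singleton is an intersection of at most $\kappa\cdot\mu$ regular-closed sets drawn from a pool of size at most $\lambda^{\mu}$ per $\eta$, which is the same style of coding used in its other theorems.
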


\begin{proof}
Take infinite cardinals $\kappa$, $\lambda$ and $\mu$ such that $\overline{\Delta}(X)\le\kappa$,  
$wL(X)\le\lambda$ and $\chi(X)\le\mu$. Then, according to Lemma \ref{ZG}, there is a family 
$\{\sU_\eta: \eta<\kappa\}$ of open covers of $X$ such that for any distinct points $x, y\in X$ we can 
find and fix an ordinal $\eta=\eta(x,y)<\kappa$ and a set $U_\eta(x,y)\in \sU_\eta$ such that 
$x\in U_\eta(x,y)$ and $y\notin\overline{\st(U_\eta(x,y),\sU_\eta)}$.  Let 
$Y_\eta(x)=\{y:y\in X\setminus\{x\},\eta(x,y)=\eta\}$. Notice that from $wL(X)\le\lambda$ it follows 
that for any $\eta < \kappa$ there exists a family $\mathcal{D}_\eta\subset \mathcal{U}_\eta$ such 
that $|\mathcal{D}_\eta|\le \lambda$ and $\overline{\bigcup \mathcal{D}_\eta}=X$. 

For every $x\in X$ fix a base $\sV_x=\{V_\xi(x):\xi<\mu\}$ of $X$ at the point $x$. Given 
$\eta<\kappa$ and $\xi<\mu$ fix $D_\xi(x)\in\sD_\eta$ such that 
$D_\xi(x)\cap V_{\xi}(x)\ne\emptyset$; observe that such an element of $\sD_\eta$ exists because the 
union of $\sD_\eta$ is dense in $X$. Let $\sD_\eta(x)=\{D_\xi(x):\xi<\mu\}$ and for each $\xi<\mu$ let 
$\sD_\eta(x,\xi)=\{D:D\in\sD_\eta(x), D\cap V_\xi(x)\ne\emptyset\}$. It is clear that for each $\xi<\mu$ 
we have  $\sD_\eta(x,\xi)\ne\emptyset$ and $|\sD_\eta(x,\xi)|\le\mu$. We claim that 
$x\in \overline{\bigcup\sD_\eta(x,\xi)}$. To see that take any open neighborhood $O$ of $x$. Then 
$O\cap V_\xi(x)$ is also an open neighborhood of $x$ and therefore there is $\xi'<\mu$ such that 
$V_{\xi'}(x)\subset O\cap V_\xi(x)$. Thus, $D_{\xi'}(x)\in\sD_\eta(x,\xi)$. Hence 
$O\cap D_{\xi'}(x)\ne\emptyset$ and therefore $O\cap (\bigcup\sD_\eta(x,\xi))\ne\emptyset$.

Notice that for every $y\in Y_\eta(x)$ there is $\xi<\mu$ such that $V_\xi(x)\subset U_\eta(x,y)$ and 
hence $y\notin \overline{\bigcup\sD_\eta(x,\xi)}$. Thus, for each $\eta<\kappa$ we have 
$x\in\bigcap\{\overline{\bigcup\sD_\eta(x,\xi)}:\xi<\mu\}$ and 
$Y_\eta(x)\cap(\bigcap\{\overline{\bigcup\sD_\eta(x,\xi)}:\xi<\mu\})=\emptyset$.
Therefore $\{x\}=\bigcap\{\bigcap\{\overline{\bigcup\sD_\eta(x,\xi)}:\xi<\mu\}:\eta<\kappa\}$.

For each $\eta<\kappa$ there are at most $\lambda^\mu$ possible subsets of $\sD_\eta$ of the type 
$\sD_\eta(x)$. For each such set, there are at most $2^\mu$ subsets of the type $\sD_\eta(x,\xi)$. 
Hence, there are at most $\lambda^\mu\cdot 2^\mu=\lambda^\mu$ possible subsets of $\sD_\eta$ of 
the type $\sD_\eta(x,\xi)$. Thus, for each $\eta<\kappa$ there are at most 
$(\lambda^\mu)^\mu=\lambda^\mu$ possible intersections 
$\bigcap\{\overline{\bigcup\sD_\eta(x,\xi)}:\xi<\mu\}$. Therefore there are at most 
$(\lambda^\mu)^\kappa$ possible intersections 
$\bigcap\{\bigcap\{\overline{\bigcup\sD_\eta(x,\xi)}:\xi<\mu\}:\eta<\kappa\}$, i.e., we proved that 
$|X|\le\lambda^{\kappa\cdot\mu}$.
\end{proof}

\begin{corollary}
If $X$ is a Urysohn space with a regular $G_\delta$-diagonal then $|X|\le wL(X)^{\chi(X)}$.
\end{corollary}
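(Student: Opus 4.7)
The plan is direct: apply Theorem \ref{G3} to the special case $\overline{\Delta}(X)=\omega$. By the definition of the regular diagonal degree recalled earlier in the paper, the hypothesis that $X$ has a regular $G_\delta$-diagonal means precisely that $\overline{\Delta}(X)=\omega$, since $\overline{\Delta}$ is an infinite cardinal by convention and therefore cannot drop below $\omega$.

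Substituting $\overline{\Delta}(X)=\omega$ into the bound of Theorem \ref{G3} gives
\[
|X| \le wL(X)^{\chi(X)\cdot\omega},
\]
and since $\chi(X)\ge\omega$ for every $T_1$-space, the exponent simplifies to $\chi(X)\cdot\omega=\chi(X)$. This yields $|X|\le wL(X)^{\chi(X)}$, as required.

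There is no genuine obstacle; the entire content is packaged inside Theorem \ref{G3}, and the step from the theorem to the corollary is purely arithmetic. What makes the statement worth isolating is the comparison it permits with the Bell--Ginsburg--Woods inequality $|X|\le 2^{\chi(X)\cdot wL(X)}$: under the additional Urysohn and regular $G_\delta$-diagonal hypotheses, the base $2$ gets replaced by the potentially much smaller $wL(X)$. In the first-countable case this collapses further to $|X|\le wL(X)^\omega$, recovering the special case emphasized in the abstract.
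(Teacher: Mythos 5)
Your proposal is correct and is exactly the intended argument: the paper states this corollary without proof precisely because it is the immediate specialization of Theorem \ref{G3} to $\overline{\Delta}(X)=\omega$, with the exponent simplification $\chi(X)\cdot\omega=\chi(X)$ that you carry out. Nothing further is needed.
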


\begin{corollary}\label{CGGD}
Let $X$ be a first countable Urysohn space with a regular $G_\delta$-diagonal. Then 
$|X|\le wL(X)^{\omega}$.
\end{corollary}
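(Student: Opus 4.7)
The plan is to derive this corollary directly from Theorem \ref{G3}, which states that for any Urysohn space $X$ we have $|X|\le wL(X)^{\chi(X)\cdot\overline{\Delta}(X)}$. The hypotheses of the corollary are tailored precisely to collapse the exponent to $\omega$, so essentially no additional work is needed beyond identifying the two relevant cardinal functions and substituting.

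First I would note that since $X$ is first countable, each point has a countable local base, so $\chi(X)=\omega$. Second, since $X$ has a regular $G_\delta$-diagonal, by definition of the regular diagonal degree we have $\overline{\Delta}(X)\le\omega$, and since $\overline{\Delta}(X)$ is by definition an infinite cardinal, $\overline{\Delta}(X)=\omega$. (Of course, $X$ being Urysohn is already built into the hypothesis that $\overline{\Delta}(X)$ is defined, and this is guaranteed by the lemma preceding the definition of $\overline{\Delta}$.)

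With these two identifications in hand, the product $\chi(X)\cdot\overline{\Delta}(X)=\omega\cdot\omega=\omega$. Substituting into the conclusion of Theorem \ref{G3} yields
\[
|X|\le wL(X)^{\chi(X)\cdot\overline{\Delta}(X)}=wL(X)^{\omega},
\]
which is exactly the desired bound.

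There is no genuine obstacle here; the corollary is a clean instantiation of the main theorem of the section. The only thing worth double-checking is that we are entitled to apply Theorem \ref{G3} in the first place, i.e.\ that $X$ is Urysohn, which is among the hypotheses, and that $\overline{\Delta}(X)$ is well-defined on a Urysohn space, which is guaranteed by the lemma characterizing Urysohn spaces via diagonals that are intersections of regular-closed neighborhoods.
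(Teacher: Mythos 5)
Your proof is correct and matches the paper's (implicit) derivation: Corollary \ref{CGGD} is simply Theorem \ref{G3} with $\chi(X)=\omega$ from first countability and $\overline{\Delta}(X)=\omega$ from the regular $G_\delta$-diagonal, so the exponent collapses to $\omega$. Nothing further is needed.
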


\begin{corollary}
If $X$ is a weakly Lindel\"of, first countable, Urysohn space then $|X|\le 2^{\overline{\Delta}(X)}$.
\end{corollary}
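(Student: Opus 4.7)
The plan is to derive this as an immediate consequence of Theorem \ref{G3}, which already does all the heavy lifting. Since $X$ is weakly Lindel\"of we have $wL(X)=\omega$, and since $X$ is first countable we have $\chi(X)=\omega$. Substituting these into the conclusion of Theorem \ref{G3} gives
\[
|X|\le wL(X)^{\chi(X)\cdot\overline{\Delta}(X)} = \omega^{\omega\cdot\overline{\Delta}(X)}.
\]

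Next I would simplify the exponent. Because $\overline{\Delta}(X)$ is by definition an infinite cardinal, $\omega\cdot\overline{\Delta}(X)=\overline{\Delta}(X)$, so the bound becomes $\omega^{\overline{\Delta}(X)}$. Finally, standard cardinal arithmetic for infinite $\kappa$ gives $\omega^{\kappa}=2^{\kappa}$, via the sandwich $2^{\kappa}\le\omega^{\kappa}\le(2^{\omega})^{\kappa}=2^{\omega\cdot\kappa}=2^{\kappa}$. Applying this with $\kappa=\overline{\Delta}(X)$ yields
\[
|X|\le \omega^{\overline{\Delta}(X)}=2^{\overline{\Delta}(X)},
\]
as desired.

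There is no real obstacle here beyond citing the right result; the corollary is essentially a specialization of Theorem \ref{G3} to the case where two of the three relevant cardinal invariants are countable, followed by a one-line cardinal-arithmetic simplification. The only thing to be careful about is observing that $\overline{\Delta}(X)$ is infinite (which is part of its definition), so that the product $\omega\cdot\overline{\Delta}(X)$ collapses and the exponential $\omega^{\overline{\Delta}(X)}$ coincides with $2^{\overline{\Delta}(X)}$.
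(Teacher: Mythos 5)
Your proposal is correct and is exactly the paper's intended route: the corollary is stated as an immediate consequence of Theorem \ref{G3}, specializing $wL(X)=\chi(X)=\omega$ and using the standard identity $\omega^{\kappa}=2^{\kappa}$ for infinite $\kappa$. Nothing further is needed.
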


\begin{corollary}
Let $X$ be a weakly Lindel\"of, developable, Urysohn space. Then $|X|\le 2^{\overline{\Delta}(X)}$.
\end{corollary}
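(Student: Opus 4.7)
The plan is to reduce this corollary directly to the preceding one by observing that every developable space is first countable. Indeed, if $(\sU_n:n<\omega)$ is a development for $X$, then by definition the family $\{\st(x,\sU_n):n<\omega\}$ is a countable local base at each point $x\in X$, which gives $\chi(X)\leq\omega$. Combined with the hypothesis $wL(X)=\omega$ and the standing assumption that $X$ is Urysohn, this places $X$ inside the scope of the previous corollary on weakly Lindel\"of, first countable, Urysohn spaces, and the conclusion $|X|\le 2^{\overline{\Delta}(X)}$ follows.

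Alternatively, and equivalently, I would apply Theorem \ref{G3} directly. Substituting $wL(X)\le\omega$ and $\chi(X)\le\omega$ into the bound $|X|\le wL(X)^{\chi(X)\cdot\overline{\Delta}(X)}$ gives
$$|X|\le\omega^{\omega\cdot\overline{\Delta}(X)}=2^{\overline{\Delta}(X)},$$
as required. There is no substantial obstacle: the whole content of the corollary lies in recognising that developability is, in the relevant quantitative sense, just first countability in disguise, so that the corollary is a specialisation rather than a strengthening of the first-countable case.
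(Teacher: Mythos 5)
Your proposal is correct and matches the paper's (implicit) route: the corollary is placed immediately after the first-countable case precisely because a development makes $\{\st(x,\sU_n):n<\omega\}$ a countable local base, so $\chi(X)\le\omega$ and Theorem \ref{G3} gives $|X|\le \omega^{\omega\cdot\overline{\Delta}(X)}=2^{\overline{\Delta}(X)}$, using that $\overline{\Delta}(X)$ is infinite.
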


\begin{corollary}
The cardinality of every weakly Lindel\"of, first countable, Urysohn space with a regular 
$G_\delta$-diagonal does not exceed $2^{\omega}$.
\end{corollary}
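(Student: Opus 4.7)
The statement is an immediate consequence of the machinery already developed in the paper, and I would not attempt a fresh argument. The plan is to deduce it directly from Theorem \ref{G3} (or equivalently from Corollary \ref{CGGD}, which is itself a special case of Theorem \ref{G3}).

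First I would translate the hypotheses into cardinal-function inequalities: weakly Lindel\"of means $wL(X)=\omega$; first countable means $\chi(X)=\omega$; and having a regular $G_\delta$-diagonal means $\overline{\Delta}(X)=\omega$. The space $X$ is Urysohn, so Theorem \ref{G3} applies and yields
\[
|X|\le wL(X)^{\chi(X)\cdot\overline{\Delta}(X)} = \omega^{\omega\cdot\omega} = \omega^{\omega} = 2^{\omega}.
\]
That is the entire proof; there is no real obstacle, since all the work has been done in establishing Theorem \ref{G3}.

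If I wanted to phrase it through the intermediate corollary already recorded in the paper, I would first invoke Corollary \ref{CGGD} to obtain $|X|\le wL(X)^{\omega}$ from first countability plus the regular $G_\delta$-diagonal, and then use $wL(X)=\omega$ to conclude $|X|\le \omega^{\omega}=2^{\omega}$. Either route is a one-line specialization, so the only thing to be careful about is citing the correct antecedent result; no additional lemma or construction is required.
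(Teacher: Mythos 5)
Your proposal is correct and is exactly the intended argument: the corollary is an immediate specialization of Theorem~\ref{G3} (equivalently of Corollary~\ref{CGGD}) obtained by setting $wL(X)=\chi(X)=\overline{\Delta}(X)=\omega$, giving $|X|\le\omega^{\omega}=2^{\omega}$. The cardinal arithmetic step $\omega^{\omega}=2^{\omega}$ is handled correctly, so nothing further is needed.
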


\begin{corollary}\label{C12}
Let $X$ be a weakly Lindel\"of, Moore space. If $|X|>2^\omega$ then $X$ is not submetrizable.
\end{corollary}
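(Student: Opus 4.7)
The plan is to prove this by contrapositive: assume $X$ is a weakly Lindel\"of Moore space that is submetrizable, and show $|X|\le 2^{\omega}$, contradicting $|X|>2^{\omega}$.

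First I would unpack what the hypotheses give us about the separation and diagonal structure of $X$. Since $X$ is a Moore space, it is regular and developable; in particular it is a regular $T_1$-space, hence Urysohn, and the development witnesses that $X$ is first countable (for each $x\in X$, the sequence $\{\st(x,\sU_n):n<\omega\}$ is by definition a base at $x$). Invoking the Gruenhage corollary stated earlier in the excerpt, the extra assumption that $X$ is submetrizable gives that $X$ has a regular $G_\delta$-diagonal.

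With this established, $X$ is a weakly Lindel\"of, first countable, Urysohn space with a regular $G_\delta$-diagonal, so the unnumbered corollary just before \ref{C12} (the one asserting that any such space has cardinality at most $2^{\omega}$) applies directly and yields $|X|\le 2^{\omega}$. Equivalently, this is Corollary \ref{CGGD} applied with $wL(X)=\omega$, giving $|X|\le wL(X)^{\omega}=\omega^{\omega}=2^{\omega}$.

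This contradicts the standing hypothesis $|X|>2^{\omega}$, so $X$ cannot be submetrizable. There is no genuine obstacle here; the whole content of the corollary is a bookkeeping observation that Moore $+$ submetrizable pushes us into the exact setting where the previous cardinality bound applies, and the only thing to be careful about is recording that ``Moore'' already supplies first countability, regularity (hence Urysohn), and a fortiori the existence of a regular $G_\delta$-diagonal once submetrizability is assumed.
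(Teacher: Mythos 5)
Your proof is correct and is exactly the derivation the paper intends (it leaves Corollary \ref{C12} unproved as an immediate consequence of the preceding corollaries): Moore gives regular, first countable, Urysohn; submetrizable gives a regular $G_\delta$-diagonal by the cited Gruenhage corollary; then Corollary \ref{CGGD} with $wL(X)=\omega$ yields $|X|\le 2^\omega$, contradicting $|X|>2^\omega$.
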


We note that it follows from Corollary \ref{CGGD} that if $X$ is a developable, Urysohn space with a 
regular $G_\delta$-diagonal then $|X|\le wL(X)^\omega$. Also, in the proof of 
\cite[Theorem 2]{Zenor72} among other things Zenor showed that the following lemma is true.

\begin{lemma}\label{LZ}
If $X$ is a developable Urysohn space then $X$ has a regular 
$G_\delta$-diagonal if and only if $X$ has a rank 3-diagonal.
\end{lemma}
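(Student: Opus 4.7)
The biconditional splits into two implications. For the direction ``rank 3-diagonal $\Rightarrow$ regular $G_\delta$-diagonal'' no new work is needed: the corollaries cited earlier (from \cite{ArhBuz06, BBR14}) give rank 3-diagonal $\Rightarrow$ strong rank 2-diagonal $\Rightarrow$ regular $G_\delta$-diagonal, and this chain of implications uses neither developability nor the Urysohn hypothesis.

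The substantive direction is the converse. Assume $X$ is developable and Urysohn with a regular $G_\delta$-diagonal. Fix a development $(\sV_k)_{k<\omega}$ for $X$, and apply Lemma \ref{ZG} to obtain a sequence $(\sU_n)_{n<\omega}$ of open covers such that for any distinct $x, y \in X$ there exist $n<\omega$ and sets $U_n(x,y), U_n(y,x) \in \sU_n$ with $x \in U_n(x,y)$, $y \in U_n(y,x)$, and no member of $\sU_n$ meets both. For each pair $(n,k)$ let $\sW_{n,k} = \{V \cap U : V \in \sV_k,\ U \in \sU_n,\ V \cap U \neq \emptyset\}$, which is plainly an open cover of $X$. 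After re-indexing the countable family $\{\sW_{n,k} : n,k < \omega\}$ as a single sequence, I claim it witnesses a rank 3-diagonal. Indeed, given distinct $x, y$, pick $n$ as above and use the development to choose $k$ large enough that $\st(x, \sV_k) \subset U_n(x,y)$ and $\st(y, \sV_k) \subset U_n(y,x)$. Suppose, toward a contradiction, that $y \in \st^3(x, \sW_{n,k})$; then there is a chain $W_1, W_2, W_3 \in \sW_{n,k}$ with $x \in W_1$, $y \in W_3$, $W_1 \cap W_2 \neq \emptyset$, and $W_2 \cap W_3 \neq \emptyset$. Writing $W_i = V_i \cap U_i$, one has $V_1 \subset \st(x, \sV_k) \subset U_n(x,y)$ and $V_3 \subset U_n(y,x)$; any $p \in W_1 \cap W_2$ lies in $V_1 \cap U_2 \subset U_n(x,y) \cap U_2$, and any $q \in W_2 \cap W_3$ lies in $V_3 \cap U_2 \subset U_n(y,x) \cap U_2$. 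Thus the single set $U_2 \in \sU_n$ meets both $U_n(x,y)$ and $U_n(y,x)$, contradicting the choice of $n$ given by Lemma \ref{ZG}.

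The key obstacle is bridging the ``length-2'' separation that Lemma \ref{ZG} supplies and the ``length-3'' separation required for a rank 3-diagonal. The development is exactly what closes this gap: by forcing the endpoints $W_1$ and $W_3$ of any hypothetical length-3 chain to sit inside the specifically chosen $U_n(x,y)$ and $U_n(y,x)$, the middle set's $\sU_n$-representative $U_2$ is pinned to a genuine member of $\sU_n$ meeting both distinguished sets, which is precisely what Lemma \ref{ZG} forbids. Once this matching of scales between the development and the Lemma \ref{ZG} sequence is set up, everything else is routine, and note that Urysohn enters the argument only through the regular $G_\delta$-diagonal hypothesis, in accordance with the remark following the definition of regular diagonal degree.
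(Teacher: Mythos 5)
The paper itself gives no proof of Lemma \ref{LZ} --- it attributes the statement to the proof of Theorem 2 in \cite{Zenor72} --- so your write-up supplies an argument the paper leaves to a citation, and it is essentially correct. The easy direction (rank $3$ $\Rightarrow$ strong rank $2$ $\Rightarrow$ regular $G_\delta$-diagonal) is exactly the cited chain, and your converse is the natural one: intersect Zenor's covers $\sU_n$ from Lemma \ref{ZG} with the development covers $\sV_k$ and check that a length-3 chain in $\sW_{n,k}$ from $x$ to $y$ would force the middle set's $\sU_n$-factor to meet both $U_n(x,y)$ and $U_n(y,x)$. One step, however, needs justification: ``choose $k$ large enough that $\st(x,\sV_k)\subset U_n(x,y)$ and $\st(y,\sV_k)\subset U_n(y,x)$.'' Under the paper's definition a development need not be nested, so $\st(x,\sV_k)$ is not monotone in $k$, and there is no reason a single $k$ should work for both points; moreover the simultaneity is genuinely load-bearing, since with only $\st(x,\sV_k)\subset U_n(x,y)$ your chain argument gives $U_2$ meeting $U_n(x,y)$ and $U_3$ meeting $U_n(y,x)$ but no single member of $\sU_n$ meeting both --- it yields $y\notin\overline{\st^2(x,\sW_{n,k})}$ (strong rank 2) but not $y\notin\st^3(x,\sW_{n,k})$. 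The repair is standard and cheap: either replace the development by the common refinements $\sV'_k=\{V_0\cap\dots\cap V_k : V_i\in\sV_i\}$, which is again a development and satisfies $\st(x,\sV'_{k+1})\subset\st(x,\sV'_k)$ for every $x$, so that ``$k$ large enough'' makes sense; or index your covers by triples, taking $\{V\cap V'\cap U : V\in\sV_{k_1},\ V'\in\sV_{k_2},\ U\in\sU_n\}$, which keeps the family countable and lets $x$ and $y$ use different development indices. With that one-line fix your proof is complete.
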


Therefore for the class of developable Urysohn spaces Corollary \ref{CGGD} is equivalent to  
Proposition 4.7 from \cite{BBR14}: If $X$ has a rank 3-diagonal then $|X|\le wL(X)^\omega$.
In relation to that result, Theorem \ref{G3} and Corollary \ref{C12} the following questions are 
of interest: 

\begin{question}[\cite{BBR14}]\label{QB2}
Is it the case that if $X$ has a strong rank 2-diagonal then $|X|\le wL(X)^\omega$?
\end{question}

\begin{question}\label{QG1}
Is there a weakly Lindel\"of, Moore space with cardinality greater than $2^\omega$?
\end{question}

We recall that in \cite{Reed76} the author attributed to 
\cite{Zenor72} the following result: A $T_2$-space $X$ has a regular $G_\delta$-diagonal if 
and only if it has a rank 3-diagonal. (We cite that result using the current terminology. 
For the exact statement see \cite[Theorem 5]{Reed76}). But, since there is no proof of this claim in 
\cite{Zenor72}, and in relation to Questions \ref{QB} and \ref{QB2} and some of the above results 
we believe that the author of \cite{Reed76} had in mind Lemma \ref{LZ} instead.

We finish with a theorem that gives an upper bound for the cardinality of a Urysohn space $X$ as a 
function of $aL(X)$ and $\overline\Delta(X)$.

\begin{theorem}\label{G2}
If $X$ is a Urysohn space then $|X|\leq  aL(X)^{\overline\Delta(X)}.$
\end{theorem}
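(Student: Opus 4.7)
The plan is to exhibit a one-to-one map from $X$ into a set of cardinality at most $aL(X)^{\overline{\Delta}(X)}$. Set $\lambda := aL(X)$ and $\kappa := \overline{\Delta}(X)$, and invoke Corollary \ref{C1} to fix a family $(\sU_\eta : \eta < \kappa)$ of open covers of $X$ with the property that for every pair of distinct points $x, y \in X$ there exist $\eta = \eta(x,y) < \kappa$ and a set $U_\eta(x,y) \in \sU_\eta$ containing $x$ such that $y \notin \overline{\st(U_\eta(x,y), \sU_\eta)}$. For each $\eta < \kappa$, applying the definition of the almost Lindel\"of number to the open cover $\sU_\eta$ yields a subfamily $\sV_\eta \subseteq \sU_\eta$ with $|\sV_\eta| \le \lambda$ and $X = \bigcup\{\overline{V} : V \in \sV_\eta\}$.

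For each $x \in X$ and each $\eta < \kappa$ the latter equality guarantees that $\{V \in \sV_\eta : x \in \overline{V}\}$ is nonempty, so using the axiom of choice one defines a function $f : X \to \prod_{\eta < \kappa} \sV_\eta$ by picking, for each pair $(x, \eta)$, a set $f(x)(\eta) \in \sV_\eta$ with $x \in \overline{f(x)(\eta)}$. The codomain has cardinality at most $\prod_{\eta < \kappa} \lambda = \lambda^\kappa$, so the desired inequality will follow once $f$ is shown to be injective.

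The injectivity check is the crucial (and essentially the only) step. Given distinct $x, y \in X$, let $\eta := \eta(x, y)$ and $V := f(x)(\eta)$, and suppose toward a contradiction that $f(y)(\eta) = V$ as well, so that both $x$ and $y$ lie in $\overline{V}$. Since $U_\eta(x,y)$ is an open neighborhood of $x$ and $x \in \overline{V}$, the open set $U_\eta(x,y)$ meets $V$; because $V \in \sU_\eta$, this forces $V \subseteq \st(U_\eta(x,y), \sU_\eta)$, whence $y \in \overline{V} \subseteq \overline{\st(U_\eta(x,y), \sU_\eta)}$, contradicting the defining property of $U_\eta(x,y)$. The only delicate point, and the main (though mild) obstacle, is precisely this alignment: a member of $\sV_\eta$ that merely has $x$ in its closure still meets every open neighborhood of $x$, and, being itself a member of $\sU_\eta$, is absorbed by the star around that neighborhood, which is exactly what synchronizes the closure condition coming from $aL(X)$ with the star-separation coming from the regular $G_\kappa$-diagonal.
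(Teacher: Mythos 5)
Your proof is correct and follows essentially the same route as the paper: use Corollary \ref{C1} to get the star-separating covers, use $aL(X)$ to extract subfamilies of size $\le aL(X)$ whose closures cover $X$, and assign to each $x$ a choice of a member of each subfamily whose closure contains $x$; the key step (a set with $x$ in its closure meets the open neighborhood $U_\eta(x,y)$, hence lies in its star, hence its closure misses $y$) is exactly the paper's. The only cosmetic difference is that you phrase the conclusion as injectivity of the choice function into $\prod_{\eta<\kappa}\sV_\eta$, while the paper shows $\{x\}=\bigcap_{\eta<\kappa}\overline{D}_{x,\eta}$ and counts the possible intersections.
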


\begin{proof}
Let $\overline\Delta(X)=\kappa$. Then $X$ is a space with a regular $G_\kappa$-diagonal. Therefore, 
according to Corollary \ref{C1}, there is a family $(\sU_\eta:\eta<\kappa)$ of open covers of $X$ such 
that if $x$ and $y$ are distinct points of $X$, then there is $\eta<\kappa$ and an open set 
$U_\eta(x,y) \in \sU_\eta$  containing $x$ such that $y\notin \overline{\st(U_\eta (x,y),\sU_\eta)}$.

Let $aL(X)=\tau$. Then for each $\eta < \kappa$ we can find a subfamily $\mathcal{D}_\eta$ of 
$\mathcal{U}_\eta$ such that $|\mathcal{D}_\eta|\le \tau$ and 
$X = \bigcup\{\overline{D}:D\in \mathcal{D}_\eta\}$. 

Let $x\in X$. For each $\eta<\kappa$ we fix $D_{x,\eta}\in \mathcal{D}_\eta$ such that 
$x\in \overline{D}_{x,\eta}$. Now, let $y\in X\setminus\{x\}$. Then there is $\eta<\kappa$ and an 
open set $U_\eta(x,y) \in \sU_\eta$  containing $x$ such that 
$y\notin \overline{\st(U_\eta (x,y),\sU_\eta)}$. Since $D_{x,\eta}\in \mathcal{D}_\eta\subset \sU_\eta$, 
we have $D_{x,\eta}\subset \st(U_\eta(x,y),\sU_\eta)$. 
Hence $\overline{D}_{x,\eta}\subset \overline{\st(U_\eta (x,y),\sU_\eta)}$ and therefore 
$y\notin \overline{D}_{x,\eta}$. This shows that $\{x\}=\bigcap_{\eta<\kappa}\overline{D}_{x,\eta}$.

Since each $D_{x,\eta}$ could be chosen out of $\tau$ many sets, there are $\tau^\kappa$ such 
possible intersections. Therefore we conclude that $|X|\le \tau^\kappa$. 
\end{proof}

\begin{corollary}\label{CGAL2}
If $X$ is a Urysohn space with a regular $G_\delta$-diagonal then $|X|\le aL(X)^\omega$.
\end{corollary}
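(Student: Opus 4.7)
The plan is to mirror the strategy used in Theorem \ref{G4} and Theorem \ref{G3}: produce, for each point, a transfinite tuple of ``witnesses'' drawn from a suitably small reservoir, and show that distinct points produce distinct tuples. Let $\overline\Delta(X)=\kappa$ and $aL(X)=\tau$. First I would invoke Corollary \ref{C1} to obtain a family $(\sU_\eta:\eta<\kappa)$ of open covers of $X$ such that for any two distinct points $x,y\in X$ there exist an index $\eta<\kappa$ and a set $U_\eta(x,y)\in\sU_\eta$ containing $x$ with $y\notin\overline{\st(U_\eta(x,y),\sU_\eta)}$.

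Next I would shrink each cover to an almost Lindel\"of refinement. Since $aL(X)\le\tau$, for each $\eta<\kappa$ choose a subfamily $\sD_\eta\subset\sU_\eta$ with $|\sD_\eta|\le\tau$ and $\bigcup\{\overline{D}:D\in\sD_\eta\}=X$. The reservoir of available witnesses at stage $\eta$ therefore has cardinality at most $\tau$. For each $x\in X$ and each $\eta<\kappa$, pick a set $D_{x,\eta}\in\sD_\eta$ with $x\in\overline{D}_{x,\eta}$; this is legal precisely because the closed sets $\overline{D}$ with $D\in\sD_\eta$ cover $X$.

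The main verification is that $\{x\}=\bigcap_{\eta<\kappa}\overline{D}_{x,\eta}$ for each $x$. Given $y\ne x$, take $\eta$ and $U_\eta(x,y)$ as above. Since $x\in\overline{D}_{x,\eta}$, every open neighborhood of $x$ meets $D_{x,\eta}$; in particular $U_\eta(x,y)\cap D_{x,\eta}\ne\emptyset$, so $D_{x,\eta}\subset\st(U_\eta(x,y),\sU_\eta)$. Taking closures yields $\overline{D}_{x,\eta}\subset\overline{\st(U_\eta(x,y),\sU_\eta)}$, and the latter avoids $y$ by the choice of $U_\eta(x,y)$. Hence $y\notin\overline{D}_{x,\eta}$, so indeed $\bigcap_{\eta<\kappa}\overline{D}_{x,\eta}=\{x\}$.

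Finally, the assignment $x\mapsto (D_{x,\eta})_{\eta<\kappa}\in\prod_{\eta<\kappa}\sD_\eta$ is injective, and the codomain has cardinality at most $\tau^\kappa$, giving $|X|\le\tau^\kappa=aL(X)^{\overline\Delta(X)}$. I do not anticipate a serious obstacle here; the only nontrivial point is the implication $x\in\overline{D}_{x,\eta}\Rightarrow U_\eta(x,y)\cap D_{x,\eta}\ne\emptyset$, which is exactly the definition of closure, and then the whole argument collapses to bookkeeping.
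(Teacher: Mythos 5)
Your argument is correct and is essentially the paper's own: you have reproduced the proof of Theorem \ref{G2} (the bound $|X|\le aL(X)^{\overline{\Delta}(X)}$), from which the corollary follows immediately since a regular $G_\delta$-diagonal means $\overline{\Delta}(X)=\omega$. Your explicit remark that $x\in\overline{D}_{x,\eta}$ forces $U_\eta(x,y)\cap D_{x,\eta}\ne\emptyset$ is exactly the point the paper passes over quickly, so the write-up is fine.
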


\begin{corollary}
The cardinality of every almost Lindel\"of Urysohn space with a regular $G_\delta$-diagonal 
does not exceed $2^{\omega}$.
\end{corollary}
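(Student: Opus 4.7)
The plan is to combine Corollary~\ref{C1}, which converts the regular $G_\kappa$-diagonal into a separating family of open covers, with the almost Lindel\"of property, which lets us thin each cover down to $aL(X)$ many sets whose closures still exhaust $X$. Set $\overline{\Delta}(X)=\kappa$ and $aL(X)=\tau$. I would start by fixing, via Corollary~\ref{C1}, a sequence $(\sU_\eta:\eta<\kappa)$ of open covers of $X$ with the property that for any distinct $x,y\in X$ there exist $\eta<\kappa$ and $U_\eta(x,y)\in\sU_\eta$ with $x\in U_\eta(x,y)$ and $y\notin\overline{\st(U_\eta(x,y),\sU_\eta)}$.

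For each $\eta<\kappa$ I would then use $aL(X)\le\tau$ to extract a subfamily $\sD_\eta\subset\sU_\eta$ with $|\sD_\eta|\le\tau$ and $X=\bigcup\{\overline{D}:D\in\sD_\eta\}$. To each $x\in X$ I attach the ``code'' obtained by choosing, for every $\eta<\kappa$, some $D_{x,\eta}\in\sD_\eta$ with $x\in\overline{D}_{x,\eta}$. Since there are at most $\tau$ choices for each coordinate and $\kappa$ coordinates, there are at most $\tau^\kappa$ possible codes, so the bound $|X|\le\tau^\kappa$ will follow once I show the coding map is injective.

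The heart of the argument, and the step that needs care, is verifying $\{x\}=\bigcap_{\eta<\kappa}\overline{D}_{x,\eta}$. Given $y\neq x$, I apply the separation property to pick $\eta$ and $U_\eta(x,y)\in\sU_\eta$ as above. The key observation is that because $x\in\overline{D}_{x,\eta}$ and $U_\eta(x,y)$ is an open neighborhood of $x$, the open set $D_{x,\eta}$ must meet $U_\eta(x,y)$; therefore $D_{x,\eta}\subset\st(U_\eta(x,y),\sU_\eta)$, whence $\overline{D}_{x,\eta}\subset\overline{\st(U_\eta(x,y),\sU_\eta)}$ and so $y\notin\overline{D}_{x,\eta}$. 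This delivers the injectivity and completes the proof.

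The main obstacle is the delicate interplay between the two uses of closure here: the almost Lindel\"of property only tells us that $x$ is in the closure of some $D_{x,\eta}\in\sD_\eta$, not that $x$ actually lies in a member of the refined family. One has to exploit precisely the fact that a point in $\overline{D}_{x,\eta}$ sees $D_{x,\eta}$ in every neighborhood to push $D_{x,\eta}$ into the star of the separating neighborhood. This is also why the argument relies on the stronger statement of Corollary~\ref{C1}~(a) rather than just Lemma~\ref{ZG}: one needs the additional closure in $\overline{\st(U_\eta(x,y),\sU_\eta)}$ to absorb the closure introduced by the almost Lindel\"of refinement.
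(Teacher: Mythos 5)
Your argument is correct and is essentially the paper's own route: you have reproduced the proof of Theorem~\ref{G2} (the bound $|X|\le aL(X)^{\overline{\Delta}(X)}$), from which the corollary follows by taking $aL(X)=\overline{\Delta}(X)=\omega$ so that $\omega^\omega=2^\omega$. Your explicit justification that $D_{x,\eta}$ meets $U_\eta(x,y)$ because $x\in\overline{D}_{x,\eta}$ is exactly the point the paper uses (slightly more tersely) to get $D_{x,\eta}\subset\st(U_\eta(x,y),\sU_\eta)$.
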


Notice that the results in Corollary \ref{CGAL2} and in Corollary \ref{CGGD} strengthen, for the class of 
spaces with regular $G_\delta$-diagonals, the following Bella and Cammaroto result: Let $X$ be a 
Urysohn space. Then $|X|\le 2^{\chi(X)\cdot aL(X)}$ \cite{BelCam88}.

\subsection*{Acknowledgements}
The author expresses his gratitude to V. Tkachuk, A. Bella and G. Gruenhage for useful e-mail 
correspondence. 

\bibliographystyle{amsplain}

\end{document}